\setlist[enumerate]{leftmargin=.5in}
\setlist[itemize]{leftmargin=.5in}
\crefname{hypothesis}{Hypothesis}{Hypotheses}
\title{
Constructing structured tensor priors for Bayesian inverse problems
% Constructing Gaussian structured tensor priors for Bayesian inverse problems
\thanks{\funding{This publication is part of the project Sustainable learning for Artificial Intelligence from noisy large-scale data (with project number VI.Vidi.213.017) which is financed by the Dutch Research Council (NWO).}}}
\author{Kim Batselier\thanks{Delft Center for Systems and Control, Delft University of Technology, The Netherlands
  (\email{k.batselier@tudelft.nl})}
}
\newcommand{\mat}[1]{\bm{#1}}
\newcommand{\ten}[1]{\bm{\mathcal{#1}}}
\newcommand{\vect}{\operatorname{vec}}
\begin{document}

\maketitle

% REQUIRED
\begin{abstract}
Specifying a prior distribution is an essential part of solving Bayesian inverse problems. The prior encodes a belief on the nature of the solution and this regularizes the problem. In this article we completely characterize a Gaussian prior that encodes the belief that the solution is a structured tensor. We first define the notion of $(\mat{A},\mat{b})$-constrained tensors and show that they describe a large variety of different structures such as Hankel, circulant, triangular, symmetric, and so on. Then we completely characterize the Gaussian probability distribution of such tensors by specifying its mean vector and covariance matrix. Furthermore, explicit expressions are proved for the covariance matrix of tensors whose entries are invariant under a permutation. These results unlock a whole new class of priors for Bayesian inverse problems. We illustrate how new kernel functions can be designed and efficiently computed and apply our results on two particular Bayesian inverse problems: completing a Hankel matrix from a few noisy measurements and learning an image classifier of handwritten digits. The effectiveness of the proposed priors is demonstrated for both problems. All applications have been implemented as reactive Pluto notebooks in Julia.
\end{abstract}

% REQUIRED
\begin{keywords}
Bayesian inverse problems, structured tensors, tensors, kernel methods
\end{keywords}

% REQUIRED
\begin{MSCcodes}
% 15A29: Inverse problems in linear algebra
% 15A69: Multilinear algebra, tensor calculus
% 62F15: Bayesian inference 
15A29, 15A69, 62F15
\end{MSCcodes}

\section{Introduction}
We consider a set of data samples $\{(\mat{x}_n , y_n)\,|\, \mat{x}_n \in \mathbb{R}^{D}, \, y_n \in \mathbb{R} \}_{n=1}^N$ and the following linear forward model
\begin{align}
\label{eq:forwardtensormodel}
    {y}_n = \langle \ten{P}(\mat{x}_n), \ten{W} \rangle + \epsilon_n.
\end{align}
Each scalar measurement $y_n$ is obtained from an inner product of a data-dependent tensor $\ten{P}(\mat{x}_n) \in \mathbb{R}^{J_1 \times \cdots \times J_D}$ with a tensor of unknown latent variables $\ten{W} \in \mathbb{R}^{J_1 \times \cdots \times J_D}$, corrupted by measurement noise $\epsilon_n$. Tensors in this context are $D$-dimensional arrays, with vectors $(D=1)$ and matrices $(D=2)$ being the most common cases. Vectorizing all tensors and collecting the measurements $y_1,\ldots,y_N$ into a vector $\mat{y} \in \mathbb{R}^N$ allows~\eqref{eq:forwardtensormodel} to be rewritten into the linear system of equations
\begin{align}
\label{eq:forwardmodel}
    \mat{{y}} = \mat{\Phi}(\mat{x})\, \mat{w} + \mat{\epsilon} .
\end{align}
% where
% \begin{align*}
%     \mat{w} = \vect{(\ten{W})} \in \mathbf{R}^{J_1\cdots J_N}
% \end{align*}
Row $n$ of the matrix $\mat{\Phi}(\mat{x}) \in \mathbb{R}^{N \times J_1\cdots J_D}$ contains the vectorization of the tensor $\ten{P}(\mat{x}_n)$. For notational convenience the indication that $\mat{\Phi}$ depends on $\mat{x}$ is dropped from here on. The inverse problem consists of inferring the latent variables $\mat{w}$ from the noisy measurements $\mat{y}$. Inverse problems of this kind appear in many different applications fields such as machine learning~\cite{blondel2016polynomial,stoudenmire2016supervised,LSSVMbook,wesel2021large,williams2006gaussian} control~\cite{batselier2022low,batselier2017tensor,pillonetto2010new,sarkka2023bayesian} and signal processing~\cite{epstein2007introduction,hansen2006deblurring,kargas2021supervised,ko2020fast,liu2012tensor,mastronardi2000fast,wahls2014learning}. In this article a Bayesian approach~\cite{bardsley2018computational} is considered by assuming that $\mat{w}$ and $\mat{\epsilon}$ are random variables. The goal is then to infer the posterior distribution $p(\mat{w}|\mat{{y}})$ of $\mat{w}$ conditioned on the measurements $\mat{y}$ using Bayes' theorem
\begin{align*}
    p(\mat{w} | \mat{{y}}) =  \frac{p(\mat{{y}} | \mat{w} ) \; p({\mat{w}}) }{p(\mat{{y}})}.
\end{align*}
The distribution $p(\mat{w})$ is called the prior and encodes a belief on what $\mat{w}$ is before the measurements are known. The main contribution of this article is the complete characterization of a prior $p(\mat{w})$ that encodes the belief that the corresponding tensor $\ten{W}$ is structured.
% The distribution $p(\mat{y})$ is typically called the evidence and is used either to compute hyperparameters or for model selection~\cite{llorente2023marginal}.
% The noise distribution $p(\mat{\epsilon})$ and prior $p(\mat{w})$ are assumed to be Gaussian
% \begin{align}
% \label{eq:Gassumptions}
%     p(\mat{\epsilon}) = \mathcal{N}(\bm{0},\mat{\Sigma} ),\; p(\mat{w}) = \mathcal{N}(\mat{w}_0, \mat{P}_0),
% \end{align}
% and are completely characterized by their mean vectors $\mat{0}, \mat{w}_0$ and covariance matrices $\mat{\Sigma},\mat{P}_0$.
A Gaussian distribution is assumed for the noise distribution $p(\mat{\epsilon}) = \mathcal{N}(\bm{0},\mat{\Sigma} )$ with mean vector $\mat{0}$ and covariance matrix $\mat{\Sigma}$ and likewise for the prior $p(\mat{w})  = \mathcal{N}(\mat{w}_0, \mat{P}_0)$. The linear forward model~\eqref{eq:forwardmodel} combined with the Gaussian assumptions results in a Gaussian posterior $p(\mat{w}|\mat{y})=\mathcal{N}(\mat{w}_+,\mat{P}_+)$ with mean vector $\mat{w}_+$ and covariance matrix $\mat{P}_+$
\begin{align}
   \mat{w}_+ &= (\mat{P}_0^{-1} + \mat{\Phi}^T\mat{\Sigma}^{-1}\mat{\Phi})^{-1}\, (\mat{\Phi}^T\mat{\Sigma}^{-1}{\mat{y}}+\mat{P}_0^{-1}\mat{w}_0 ), \label{eq:meanposterior} \\
  \mat{P}_+ &= (\mat{P}_0^{-1} + \mat{\Phi}^T\mat{\Sigma}^{-1}\mat{\Phi})^{-1}. \label{eq:covposterior}
\end{align}
The role of the prior $p(\mat{w})$ can now be understood from~\eqref{eq:meanposterior} and~\eqref{eq:covposterior}. In the absence of data ($\mat{\Phi}=\mat{0}$ and $\mat{y}=\mat{0}$) the posterior equals the prior. In other words, the prior encodes a belief on what the solution $\mat{w}$ of~\eqref{eq:forwardmodel} should be before any data is known. A natural question to ask is then what kind of prior to use. In this article we consider a prior encoding the belief that the tensor $\ten{W}$ has a structure that is completely determined by a matrix $\mat{A} \in \mathbb{R}^{I \times J_1\cdots J_D}$ and vector $\mat{b} \in \mathbb{R}^I$ such that
\begin{align*}
    \mat{A} \; \vect{(\ten{W})} &= \mat{b},
\end{align*}
which we will refer to as $(\mat{A},\mat{b})$-constrained tensors.
% It will be shown that a large class of structured tensors are in fact $(\mat{A},\mat{b})$-constrained.
% of tensors $\ten{W}$ such that that encode structure of the tensor $\ten{W}$ in~\eqref{eq:forwardtensormodel} are constructed since structured tensors such as for example Hankel, symmetric, triangular tensors are found in myriad application fields~\cite{}.
The contributions of this article are threefold.
\begin{enumerate}
    \item We show how the definition of $(\mat{A},\mat{b})$-constrained tensors is well-motivated since it encompasses a wide variety of relevant structured tensors.  Examples are given for tensors with fixed entries, tensors with known sums of entries and symmetric, Hankel, Toeplitz, circulant, and triangular tensors.
    \item In Theorem~\ref{theo:nullspace} we completely characterize the mean vector $\mat{w}_0$ and covariance matrix $\mat{P}_0$ of the prior $p(\mat{w})$ for $(\mat{A},\mat{b})$-constrained tensors.
    \item In Theorems~\ref{theo:covar1} and~\ref{theo:orthcovar} we provide explicit expressions for $\mat{P}_0$ for $(\mat{A},\mat{b})$-constrained tensors whose entries remain invariant under a permutation $\mat{P}$. Such tensors will be called $\mat{P}$-invariant or skew-$\mat{P}$-invariant.
\end{enumerate}
These three contributions are important because the prior mean $\mat{w}_0$ and covariance matrix $\mat{P}_0$ are necessary to solve the Bayesian inverse problem via equations~\eqref{eq:meanposterior} and~\eqref{eq:covposterior}.
% Indeed, the posterior mean and covariance are obtained from solving the normal equation
% \begin{align}
% \label{eq:posteriorLS}
%     (\mat{P}_0^{-1} + \mat{\Phi}^T\mat{\Sigma}^{-1}\mat{\Phi}) \; \mat{w}_+ &= \mat{\Phi}^T\mat{\Sigma}^{-1}{\mat{y}}+\mat{P}_0^{-1}\mat{w}_0. 
% \end{align}
Contrary to most solution strategies for linear least squares problems the matrix inverse of $\mat{P}_0^{-1} + \mat{\Phi}^T\mat{\Sigma}^{-1}\mat{\Phi}$ is explicitly required as it forms the posterior covariance. Also note that the dimension of the matrix to invert is $J_1J_2\ldots J_D \times J_1J_2\ldots J_D$, which limits the use of direct solvers to cases of small $J$ and $D$. Hybrid projection methods~\cite{chung_silvia,chung2017} are a viable alternative for cases where $J$ and $D$ are prohibitively large. Another alternative is to solve the corresponding dual problem, which is described in terms of the so-called kernel matrix $\mat{\Phi}\,\mat{P}_0 \mat{\Phi}^T \in \mathbb{R}^{N \times N}$. This approach is commonly used in least-squares support vector machines~\cite{LSSVMbook} and Gaussian Processes~\cite{williams2006gaussian} and has a computational complexity of at least $O(N^2)$. When the tensor $\ten{P}(\mat{x}_n)$ exhibits a low-rank structure then another way to obtain low computational complexity of solving~\eqref{eq:meanposterior} is by imposing a low-rank tensor structure to $\mat{w}_+$ and $\mat{P}_+$~\cite{batselier2017tensor,novikov_exponential_2018,stoudenmire2016supervised}. Developing dedicated solution strategies for equations~\eqref{eq:meanposterior} and~\eqref{eq:covposterior}, however, lies outside the scope of this article. 
% which lies completely on the characterization of the Gaussian prior $p(\mat{w})$ for $(\mat{A},\mat{b})$-constrained tensors.

% The main contribution in this article is the derivation of a Gaussian prior $p(\mat{w})$ that encodes the belief that the corresponding tensor $\ten{W}$ has a particular structure. We consider a structure of $\ten{W}$ that is completely determined by a matrix $\mat{A} \in \mathbb{R}^{I \times J_1\cdots J_D}$ and vector $\mat{b} \in \mathbb{R}^I$ such that
% \begin{align*}
%     \mat{A} \; \vect{(\ten{W})} &= \mat{b},
% \end{align*}
% which we will refer to as $(\mat{A},\mat{b})$-constrained tensors. It will be shown that a large class of structured tensors are in fact $(\mat{A},\mat{b})$-constrained.

% % The outline is not required, but we show an example here.
% The paper is organized as follows. Our main results are in
% \cref{sec:main}, our new algorithm is in \cref{sec:alg}, experimental
% results are in \cref{sec:experiments}, and the conclusions follow in
% \cref{sec:conclusions}.

\subsection{Notation}
\label{subsec:notation}

Tensors in this article are multi-dimensional arrays with real entries. We denote scalars by italic letters $a, b, \ldots $, vectors by boldface italic letters $\mat{a}, \mat{b}, \ldots$, matrices by boldface capitalized italic letters $\mat{A}, \mat{B}, \ldots$ and higher-order tensors by boldface calligraphic italic letters $\ten{A}, \ten{B}, \ldots $. The vector $\mat{e}_{j_d} \in \mathbb{R}^{J_d}$ denotes a canonical basis vector that has a single nonzero unit entry at position $j_d$. The vector $\mat{1}_{J_d} \in \mathbb{R}^{J_d}$ denotes a vector of ones and $\mat{I}_{J_d} \in \mathbb{R}^{J_d \times J_d}$ is the unit matrix. The number of indices required to determine an entry of a tensor is called the order of the tensor. A $D$th order or $D$-way tensor is hence denoted $\ten{A} \in \mathbb{R}^{J_1 \times J_2 \times \cdots \times J_D}$. An index $j_d$ always satisfies $1\leq j_d \leq J_d$, where $J_d$ is called the dimension of that particular mode. Tensor entries are denoted $w_{j_1,j_2,\cdots, j_D}$. The merger of a set of $d$ separate indices $j_1,j_2,\ldots,j_d$ is denoted by the single index $$\overline{j_1 j_2 \ldots j_d}=j_1+(j_2-1)\,J_1 + \cdots + (j_d-1) J_1 \cdots J_{d-1}.$$
For a tensor $\ten{W}$ we will always assume that the corresponding vector $\mat{w} = \textrm{vec}(\ten{W})$. The square root matrix $\sqrt{\mat{P}}$ of $\mat{P}$ satisfies per definition $\mat{P}=\sqrt{\mat{P}}\,(\sqrt{\mat{P}})^T.$

\section{$(\pmb{A},\pmb{b})$-constrained tensors}
Before characterizing the prior $p(\mat{w})$ we first demonstrate the breadth of $(\mat{A},\mat{b})$-constrained tensors through eight examples. These examples demonstrate that the definition of $(\mat{A},\mat{b})$-constrained tensors is well-motivated in that it captures a wide variety of structured tensors.

\subsection{Tensors with fixed entries}
A tensor $\ten{W} \in \mathbb{R}^{J_1 \times J_2 \times \cdots \times J_D}$ with $I$ fixed entries can be described as $\mat{A} \, \mat{w} = \mat{b}$ where row $i$ of the matrix $\mat{A} \in \mathbb{R}^{I \times J_1\cdots J_D}$ is a canonical basis vector $\mat{e}_{\overline{j_1\cdots j_D}}$ that selects entry $w_{j_1,\ldots,j_D}$. The corresponding fixed numerical value of $w_{j_1,\ldots,j_D}$ is then given by $b_i$. Such fixed values are in practice usually zero, for example in triangular or banded matrices. Such structures can also be generalized to the tensor case.
\begin{definition}
\label{def:triangulartensor}
    A tensor $\ten{W} \in \mathbb{R}^{J_1 \times J_2 \times \cdots \times J_D}$ is lower (upper) triangular when $w_{j_1,j_2,\cdots,j_D}=0$ holds for each consecutive index pair $j_d,j_{d+1}$ such that $j_d-j_{d+1} < (>)\, 0$.
\end{definition}
The characterization of a lower (upper) triangular tensor as an $(\mat{A},\mat{b})$-constrained tensor is given in the following lemma.
% The $\mat{A}$ matrix that describes tensors with known entries is typically sparse since it contains only $I$ nonzeros and can be highly structured as demonstrated by the following example. 
\begin{lemma}
    \label{lemma:fixedentries}
Let $\mat{L}$ be the $J(J-1)/2 \times J^2$ matrix that has on each row a single unit entry for each particular occurrence of $j_1-j_2 <(>)\,0$. Lower (upper) triangular tensors are then described by
\begin{align*}
    \mat{A} &= \begin{pmatrix}
        \mat{L}  \otimes  \mat{I}_J  \otimes   \cdots  \otimes  \mat{I}_J\\
        \mat{I}_J \otimes  \mat{L} \otimes  \cdots \otimes  \mat{I}_J\\
        \vdots \\
        \mat{I}_J \otimes  \mat{I}_J \otimes  \cdots \otimes  \mat{L}
    \end{pmatrix} \in \mathbb{R}^{\frac{(D-1)(J-1)J^{D-1}}{2} \times J^D},
\end{align*}
and a vector $\mat{b} \in \mathbb{R}^{\frac{(D-1)(J-1)J^{D-1}}{2}}$ of zeros.
\end{lemma}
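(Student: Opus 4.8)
The plan is to turn the pointwise vanishing conditions of Definition~\ref{def:triangulartensor} into an explicit list of linear equations $\mat{A}\,\vect(\ten{W}) = \mat{b}$ and to recognise that list as the rows of the Kronecker-structured matrix claimed in the statement. First I would restate the definition in set form: $\ten{W}$ is lower (upper) triangular exactly when, for every $d \in \{1,\ldots,D-1\}$, the entry $w_{j_1,\ldots,j_D}$ vanishes for every index tuple with $j_d - j_{d+1} < (>)\, 0$. The complete constraint set is therefore the union over $d = 1,\ldots,D-1$ of the families $\{\, w_{j_1,\ldots,j_D} = 0 \;:\; j_d < (>)\, j_{d+1} \,\}$, and since each constrained entry is forced to the value $0$, the right-hand side must be the zero vector $\mat{b} = \mat{0}$.

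Next, for a fixed $d$, I would build the row block $\mat{A}_d$ of $\mat{A}$ that encodes the $d$-th family. By construction $\mat{L}$ carries on each of its rows a single unit entry, in the column indexed by $\overline{j_1 j_2}$ of one pair with $j_1 < (>)\, j_2$; equivalently, $\mat{L}$ is exactly the operator that reads off, from a vectorised $J \times J$ array, the entries that must vanish in a lower (upper) triangular matrix. Because consecutive modes $d$ and $d+1$ occupy adjacent positions in the merged index $\overline{j_1 \cdots j_D}$ of Section~\ref{subsec:notation}, the operator that extracts $w_{j_1,\ldots,j_D}$ for all tuples with $j_d < (>)\, j_{d+1}$ and arbitrary remaining indices is obtained by placing $\mat{L}$ in the Kronecker factor acting on modes $d$ and $d+1$ and an identity $\mat{I}_J$ in each of the other $D-2$ factors. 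This reproduces one of the $D-1$ row blocks written in the statement (their listing order is immaterial), and a short check of the Kronecker/vec bookkeeping confirms that every row of $\mat{A}_d$ equals a canonical basis vector $\mat{e}_{\overline{j_1\cdots j_D}}^T$ for the intended tuple, so $\mat{A}_d\,\vect(\ten{W})$ collects precisely the entries of the $d$-th family.

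Stacking the blocks produces $\mat{A}$, and $\mat{A}\,\vect(\ten{W}) = \mat{0}$ holds if and only if $\mat{A}_d\,\vect(\ten{W}) = \mat{0}$ for every $d$, if and only if $w_{j_1,\ldots,j_D} = 0$ whenever $j_d < (>)\, j_{d+1}$ for some $d$, which by the restated definition is exactly lower (upper) triangularity; rows that coincide because one tuple violates several consecutive pairs at once do no harm to this equivalence. It remains to count: $\mat{L}$ has $\frac{J(J-1)}{2}$ rows, so each block has $\frac{J(J-1)}{2}\,J^{D-2} = \frac{(J-1)J^{D-1}}{2}$ rows and the $D-1$ blocks give the claimed $\frac{(D-1)(J-1)J^{D-1}}{2}$ rows, with $\mat{b}$ a zero vector of the same length.

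The only genuinely delicate point is the middle step, namely checking that the single factor $\mat{L}$ indeed lands on modes $d$ and $d+1$ under the vectorisation convention of Section~\ref{subsec:notation}; this is where the adjacency of consecutive modes in $\vect(\ten{W})$ enters, and it is what allows a single $\frac{J(J-1)}{2} \times J^2$ block to capture an entire constraint family at once. Everything else — rewriting the definition, assembling the stack, and the dimension tally — is routine index bookkeeping.
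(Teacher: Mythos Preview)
Your proposal is correct and follows essentially the same approach as the paper: identify $\mat{b}=\mat{0}$, partition $\mat{A}$ into $D-1$ block rows (one per consecutive index pair), and recognise each block as a Kronecker product of $D-2$ identities with $\mat{L}$ sitting on the two adjacent modes. Your write-up is more careful than the paper's about the vectorisation convention and the dimension count, and your remark that duplicate rows are harmless anticipates exactly the observation the paper makes in Example~\ref{example:lowtriang}.
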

\begin{proof}
The known fixed values of lower (upper) triangular tensors are zero and hence $\mat{b}$ is a vector of zeros. Each row of the matrix $\mat{A}$ has a single unit entry to select a particular tensor entry for which some consecutive indices $j_d,j_{d+1}$ satisfy $j_d-j_{d-1} <(>)\, 0$. A tensor with $D$ indices has $D-1$ consecutive index pairs and therefore $\mat{A}$ is partitioned into $D-1$ block rows. Each block row is a Kronecker product of $D-2$ identity matrices with $\mat{L}$. The Kronecker product of identity matrices generates all possible index combinations of $D-2$ index values. The $\mat{L}$ matrix factor in the Kronecker product adds the remaining 2 indices but only considers index pairs for which  $j_d-j_{d-1} <(>)\, 0$.  
\end{proof}
The $\mat{A}$ matrix that describes tensors with known fixed entries in Lemma~\ref{lemma:fixedentries} is sparse and highly structured as demonstrated by the following example.
\begin{example}
\label{example:lowtriang}
Consider a lower triangular tensor $\ten{W} \in \mathbb{R}^{3 \times 3 \times 3}$. The condition $j_d-j_{d+1}<0$ occurs in 3 cases $(j_d,j_{d+1}) \in \{(1,2),(1,3),(2,3)\}$. Defining the matrix $\mat{L} \in \mathbb{R}^{3 \times 9}$ with 3 nonzero entries
\begin{align*}
    l_{1,\overline{12}} = l_{2,\overline{13}}= l_{3,\overline{23}}=1 
\end{align*}
allows us to describe the desired $\mat{A}$ matrix as
\begin{align}
\label{eq:triaA}
    \mat{A} &= \begin{pmatrix}
        \mat{I}_3 \otimes \mat{L} \\
        \mat{L} \otimes  \mat{I}_3
    \end{pmatrix} \in \mathbb{R}^{18 \times 27}.
\end{align}
This particular sparse structure is exploited in Section~\ref{sec:nullspace} when a basis for the nullspace of $\mat{A}$ needs to be computed. Note that there are actually only 17 zero entries for which $j_d-j_{d+1}<0$, which implies that the $\mat{A}$ matrix from equation~\eqref{eq:triaA} counts the case $j_1=1,j_2=2,j_3=3$ twice. This, however, does not negatively affect the resulting prior. 
\end{example}
% The construction of the $\mat{A}$ matrix as in Example~\ref{example:lowtriang} for triangular tensors is easily generalized to the $D$th-order case $\ten{W} \in \mathbb{R}^{J \times \cdots \times J} $. The $\nicefrac{J(J-1)}{2} \times J^2$ matrix $\mat{S}$ contains $\nicefrac{J(J-1)}{2}$ nonzero entries for each case where $j_d-j_{d+1} < (>) 0$. We then have that
% \begin{align*}
%     \mat{A} = \begin{pmatrix}
%         \mat{S}  \otimes  \mat{I}_J  \otimes   \cdots  \otimes  \mat{I}_J\\
%         \mat{I}_J \otimes  \mat{S} \otimes  \cdots \otimes  \mat{I}_J\\
%         \vdots \\
%         \mat{I}_J \otimes  \mat{I}_J \otimes  \cdots \otimes  \mat{S}
%     \end{pmatrix} \in \mathbb{R}^{\frac{(D-1)(J-1)J^{D-1}}{2} \times J^D}.
% \end{align*}
% \textcolor{red}{Veralgemeen bovenstaand voorbeeld door definitie van $J(J-1)/2 \times J^2$ matrix $\mat{S}$ te geven en dan $\mat{A}$ te definieren als de $(D-2)J(J-1)/2 \times J^D$ matrix van $D-2$ concatenaties van Kronecker products waarbij $\mat{S}$ telkens van plaats verandert.}

\subsection{Known sum of entries}
Tensors for which the sum over all or only particular entries add up to a known value are also quite common in applications. Stochastic tensors are a particular example~\cite{gleich2015multilinear,li2014limiting}. Knowing a particular sum of entries can be described as follows.
\begin{lemma}
\label{lem:constantsum}
    Tensors $\ten{W}\in \mathbb{R}^{J_1 \times \cdots \times J_D}$ for which the sum over the entries of an index set $\mathcal{J}$ is a tensor $\ten{B}$ are described by
\begin{align}
\label{eq:Asum}
 \mat{A}\;\mat{w} = \vect{(\ten{B})} \; \textrm{ with } \;   \mat{A} = \mat{A}_D \otimes \cdots \otimes \mat{A}_1,  
\end{align}
where each matrix $\mat{A}_d \;(d=1,\ldots,D)$ in the Kronecker product is per definition
\begin{equation}
\label{eq:sumAs}
    \mat{A}_d = 
    \begin{cases}
        \mat{1}_{J_{j_d}}^T & \text{if } j_d \in \mathcal{J}, \\
        \mat{I}_{J_{j_d}}& \text{if } j_d \notin \mathcal{J}.
    \end{cases}
\end{equation}
\end{lemma}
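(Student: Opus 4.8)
The plan is to recognize the product $\mat{A}\,\mat{w}$ with $\mat{A} = \mat{A}_D \otimes \cdots \otimes \mat{A}_1$ as the vectorization of a multilinear (mode-wise) transformation of $\ten{W}$, and then to verify that this transformation is exactly the partial summation of $\ten{W}$ over the modes in $\mathcal{J}$. First I would invoke the standard identity that, for the mode-$d$ products $\ten{B} = \ten{W} \times_1 \mat{A}_1 \times_2 \mat{A}_2 \cdots \times_D \mat{A}_D$, the vectorizations satisfy
\begin{align*}
 \vect(\ten{B}) = \left( \mat{A}_D \otimes \cdots \otimes \mat{A}_1 \right)\vect(\ten{W}).
\end{align*}
The reversed Kronecker ordering appearing in~\eqref{eq:Asum} is precisely the one dictated by the column-major convention $\mat{w} = \vect(\ten{W})$ fixed in Section~\ref{subsec:notation}, so it suffices to choose the factors $\mat{A}_d$ so that $\ten{B}$ is the desired tensor of partial sums.

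Next I would inspect the action of each factor separately. Multiplying mode $d$ of $\ten{W}$ on the left by $\mat{I}_{J_d}$ leaves that mode unchanged, while multiplying it by $\mat{1}_{J_d}^T$ contracts that mode with the all-ones vector; that is, it replaces the index $j_d$ by the sum $\sum_{j_d=1}^{J_d}$ and collapses the mode to size one. Taking $\mat{A}_d = \mat{1}_{J_d}^T$ for every mode $d$ belonging to $\mathcal{J}$ and $\mat{A}_d = \mat{I}_{J_d}$ otherwise, as in~\eqref{eq:sumAs}, the resulting tensor $\ten{B} = \ten{W} \times_1 \mat{A}_1 \cdots \times_D \mat{A}_D$ therefore has, for each fixed choice of the indices of the modes outside $\mathcal{J}$, an entry equal to $w_{j_1,\ldots,j_D}$ summed over all indices of the modes in $\mathcal{J}$. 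This is exactly the statement that the sum of the entries of $\ten{W}$ over the index set $\mathcal{J}$ equals $\ten{B}$.

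Combining the two steps gives $(\mat{A}_D \otimes \cdots \otimes \mat{A}_1)\,\mat{w} = \vect(\ten{B})$, which is the claimed constraint. I expect the only genuinely delicate point to be the index bookkeeping: matching the order of the Kronecker factors, and the flattening of the contracted tensor $\ten{B}$, to the $\vect$ convention used throughout. Once the vectorization identity for mode products is in hand there is essentially no computation left; the verification reduces to reading off which modes the all-ones vectors sum over.
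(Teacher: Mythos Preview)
Your proposal is correct and follows essentially the same argument as the paper: express the partial sum as the multilinear product $\ten{W}\times_1\mat{A}_1\cdots\times_D\mat{A}_D=\ten{B}$ and then vectorize via the standard Kronecker identity to obtain $(\mat{A}_D\otimes\cdots\otimes\mat{A}_1)\,\mat{w}=\vect(\ten{B})$. Your write-up simply spells out in slightly more detail why the choice $\mat{A}_d=\mat{1}_{J_d}^T$ versus $\mat{A}_d=\mat{I}_{J_d}$ realizes the summation over the modes in~$\mathcal{J}$.
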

The Kronecker product in~\eqref{eq:Asum} has as its leftmost factor $d=D$ and runs towards $d=1$ due to the opposite ordering of indices in the Kronecker product.
\begin{proof}
With the definitions of the $\mat{A}_d$ matrices the sum over the relevant entries of $\ten{W}$ is written in terms of n-mode products~\cite[p.~460]{kolda2009tensor}
\begin{align*}
\ten{W} \times_1 \mat{A}_1 \times_2 \cdots \times_D \mat{A}_D &= \ten{B}.
\end{align*}
Using the vectorization operation this can be rewritten as
\begin{align*}
    \left(\mat{A}_D \otimes \cdots \otimes \mat{A}_1 \right)\;\mat{w} &= \mat{b},
\end{align*}
which finalizes the proof.    
\end{proof}
\begin{example}
Let $\mat{W} \in \mathbb{R}^{2 \times 3}$ be a matrix for which each each row sum equals to 1. Lemma~\ref{lem:constantsum} then implies that
\begin{align*}
    \mat{A} &= \begin{pmatrix}1 & 1  & 1\end{pmatrix} \otimes  \begin{pmatrix}1 & 0\\0 & 1 \end{pmatrix}, \; \mat{b} = \mat{1}_2.
\end{align*}
\end{example}

\subsection{Eigenvector structure}
Tensors whose vectorization is an eigenvector of a matrix $\mat{P}$ with eigenvalue $\lambda$ are described by the constraint $\mat{A}=\lambda\,\mat{I}-\mat{P}$ and $\mat{b}=\mat{0}$. 
An important structure in this article is obtained when $\mat{P}$ is a permutation matrix. Indeed, $\mat{P}\,\mat{w}=\mat{w}$ then implies that the entries of $\ten{W}$ remain invariant under the permutation $\mat{P}$. The distinction between $\lambda=1$ and $\lambda=-1$ is made explicit through the following two definitions.
\begin{definition}
\label{def:Pinvar}
Let $\mat{P} \in \mathbb{R}^{J^D \times J^D}$ be a permutation matrix. A $\mat{P}$-invariant tensor $\ten{W}$ is defined by
\begin{align*}
    \left(\mat{I}-\mat{P} \right) \mat{w} &= \mat{0} \Leftrightarrow \mat{P}\, \mat{w} =  \mat{w}.
    % \left(\mat{I}-\mat{P} \right) \vect{(\ten{W})} &= \mat{0} \Leftrightarrow \mat{P}\, \vect{(\ten{W})} =  \vect{(\ten{W})} .
\end{align*}
Likewise, a skew-$\mat{P}$-invariant tensor $\ten{W}$ satisfies per definition 
\begin{align*}
    \left(-\mat{I}-\mat{P} \right) \mat{w} &= \mat{0} \Leftrightarrow \mat{P}\, \mat{w} =  -\mat{w}.
    % \left(-\mat{I}-\mat{P} \right) \vect{(\ten{W})} &= \mat{0} \Leftrightarrow \mat{P}\, \vect{(\ten{W})} =  -\vect{(\ten{W})} .
\end{align*}
\end{definition}
In this way any particular permutation matrix $\mat{P}$ then defines a corresponding structured tensor. 
Next we discuss some prominent examples of $\mat{P}$-invariant tensor structures.
\begin{definition}(Cyclic Symmetric tensor~\cite{batselier2017constructive})
\label{def:genpershuf}
The cyclic index shift permutation matrix $\mat{C}$ of a $D$-way tensor {$\ten{W}$} is the $J^D \times J^D$ permutation matrix 
\begin{align*}
\mat{C} \;=\; \begin{pmatrix}
\mat{I}(1:I^{D-1}:I^D,:)\\
\mat{I}(2:I^{D-1}:I^D,:)\\
\vdots\\
\mat{I}(I^{D-1}:I^{D-1}:I^D,:)\\
\end{pmatrix},
\end{align*}
where $\mat{I}$ is the $J^D \times J^D$ identity matrix and Matlab colon notation is used to denote submatrices. A $\mat{C}$-invariant tensor $\ten{W}$ is then called a cyclic symmetric tensor. 
\end{definition}
Defining the vector $\tilde{\mat{w}}:=\mat{C}\;\textrm{vec}(\ten{W})$ it can be verified that
\begin{align*}
    \tilde{w}_{j_D, j_1, \ldots j_{D-1}} &= w_{j_1,  \ldots j_{D-1}, j_D}.
\end{align*}
In other words, $\mat{C}$ performs a cyclic shift of the indices to the right.
% The permutation order of $\mat{C}$ is deduced to be $K=D$ from the fact that cyclic shifting the $D$ indices of a $D$-way tensor $D$ times result in the original tensor.
When $D=2$, then $\mat{C}$ uniquely defines $J \times J$ symmetric matrices $\mat{W}$ since the cyclic index shift property implies that $\tilde{w}_{j_2,j_1}=w_{j_1,j_2}$~\cite{van2000ubiquitous}. The case $D > 2$ does not result in a fully symmetric tensor, as for example the required index permutation $j_1,j_2,j_3 \rightarrow j_1,j_3,j_2$ would not be enforced by $\mat{C}$. $\mat{C}$-invariance is therefore a weaker constraint than full symmetry.
% The matrix $\mat{S}^d$, with $d \geq 0$ any nonnegative integer, then represents $d$ cyclic permutations to the right. For example, $\tilde{\mat{a}}=\mat{S}^2\;\textrm{vec}(\ten{A})$ then implies that
% \begin{align*}
% \tilde{a}_{i_{D-1},i_D,i_1,i_2,\ldots,i_{D-2}} &= a_{i_1,i_2,\ldots,i_{D-1},i_D}.    
% \end{align*}
% The cyclic permutation of $D$ indices to the left is described by the inverse matrix $\mat{S}^{-1}$, which is also $\mat{S}^T$ due to the orthogonality of permutation matrices. For $D$ indices, there are only $D$ distinct cyclic permutations to the right. 

\begin{definition}(Symmetric tensor) Let $\mat{S}$ be the permutation matrix such that all entries of $\tilde{\mat{w}}:=\mat{S}\;\textrm{vec}(\ten{W})$ satisfy $\tilde{w}_{j_1,\ldots,j_D}= w_{\pi(j_1,\ldots, j_D)}$, where $\pi(j_1,\ldots, j_D)$ is any permutation of the indices. A $\mat{S}$-invariant tensor $\ten{W}$ is per definition a symmetric tensor.
    
\end{definition}
\begin{definition}(Centrosymmetric tensor~\cite{batselier2017constructive})
A $\mat{J}$-invariant tensor $\ten{W}$, where $\mat{J}$ is the column-reversed identity matrix, is called a centrosymmetric tensor.
\end{definition}
A centrosymmetric tensor $\ten{W}$ satisfies
\begin{align*}
    w_{j_1,\ldots, j_D} = w_{J_1-j_1+1,\ldots,J_D-j_D+1}.
\end{align*}
Probably the most famous tensor that exhibits centrosymmetry is the matrix-matrix multiplication tensor~\cite{de1978varieties}.
\begin{definition}(Hankel Tensor)
Let $\mat{H} \in \mathbb{R}^{J^D \times J^D}$ be the permutation matrix that cyclically permutes all $D$ indices $j_1,\ldots,j_D$ with constant index sum $j_1+\cdots +j_D$. A $\mat{H}$-invariant tensor $\ten{W}$ is called a Hankel tensor. 
\end{definition}
The minimal index sum is $D=1+1+1+\cdots+1$ and maximal index sum is $JD=J+J+\cdots +J$. This implies that $\mat{H}$ consists of  $JD-D+1$ permutation cycles and $\textrm{rank}(\mat{H})=JD-D+1$.
\begin{definition}(Toeplitz Tensor)
Let $\mat{T} \in \mathbb{R}^{J^D \times J^D}$ be the permutation matrix that cyclically permutes all indices $j_d \mapsto j_d+1$, where $J_d + 1 \mapsto 1 \;(d=1,\ldots,D)$. A $\mat{T}$-invariant tensor $\ten{W}$ is called a Toeplitz tensor. 
\end{definition}
A special case of a Toeplitz tensor is a circulant tensor.
\begin{definition}(Circulant Tensor)
Let $\mat{T} \in \mathbb{R}^{J^D \times J^D}$ be the permutation matrix that cyclically permutes all indices $j_d \mapsto \bmod(j_d+1,J_d) \neq 0$. If $\bmod(j_d+1,J_d) = 0$ then $j_d \mapsto J_d \;(d=1,\ldots,D)$. A $\mat{T}$-invariant tensor $\ten{W}$ is called a circulant tensor. 
\end{definition}

\section{Full characterization of the prior distribution}
\label{sec:nullspace}
In this section the Gaussian prior $p(\mat{w})$ for $(\mat{A},\mat{b})$-constrained tensors is fully characterized. We also discuss how the square root covariance matrix $\sqrt{\mat{P}}_0$ can be computed without explicitly constructing the matrix $\mat{A}$ through a block-row partitioning of $\mat{A}$.
% distribution of permutation invariant tensors is presented. The difference with Section~\ref{sec:explicit} is that the covariance matrix is determined from a nullspace and is actually not limited to ensuring permutation invariance. Suppose we want to generate the Gaussian distribution for which each sample $\mat{w}$ satisfies the linear constraint $\mat{A}\;\mat{w}= \mat{b}$. This constraint is more general than $\mat{P}$-invariance, which is the particular case $\mat{A}=\mat{I}-\mat{P}, \mat{b}=\mat{0}$. 

\begin{theorem}
\label{theo:nullspace}
The Gaussian distribution of $(\mat{A},\mat{b})$-constrained tensors $\mathcal{N}(\mat{w}_0, \mat{P}_0)$ is described by a mean vector $\mat{w}_0$ such that $\mat{A}\,\mat{w}_0=\mat{b}$ and by a covariance matrix $\mat{P}_0$ such that the columns of $\sqrt{\mat{P}_0}$ span the right nullspace of $\mat{A}$.
\end{theorem}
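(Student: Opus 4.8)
The plan is to recognize this as a statement about the support of a Gaussian measure: the prior that encodes ``$\ten{W}$ is $(\mat{A},\mat{b})$-constrained'' and nothing more should be the Gaussian whose support is exactly the feasible affine set $V := \{\mat{w} : \mat{A}\mat{w} = \mat{b}\}$, which is assumed nonempty (equivalently, $\mat{b} \in \mathrm{range}(\mat{A})$, so that the constraint is consistent). I would then split the claim into a necessity part (the stated conditions are forced whenever the distribution is supported in $V$) and a maximality part (the nullspace must be spanned, not merely contained, if the prior is to exclude no feasible tensor a priori).

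For the necessity part I would use the representation $\mat{w} = \mat{w}_0 + \sqrt{\mat{P}_0}\,\mat{z}$ with $\mat{z} \sim \mathcal{N}(\mat{0},\mat{I})$, which is available from the definition $\mat{P}_0 = \sqrt{\mat{P}_0}(\sqrt{\mat{P}_0})^T$ in Section~\ref{subsec:notation}. Demanding that every realization satisfy the constraint means $\mat{A}\mat{w}_0 + \mat{A}\sqrt{\mat{P}_0}\,\mat{z} = \mat{b}$ must hold for all $\mat{z}$, since the standard Gaussian has full support. Evaluating at $\mat{z}=\mat{0}$ gives $\mat{A}\mat{w}_0 = \mat{b}$; substituting this back leaves $\mat{A}\sqrt{\mat{P}_0}\,\mat{z} = \mat{0}$ for all $\mat{z}$, hence $\mat{A}\sqrt{\mat{P}_0} = \mat{0}$, i.e. every column of $\sqrt{\mat{P}_0}$ lies in the right nullspace of $\mat{A}$. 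Equivalently, the support $\mat{w}_0 + \mathrm{range}(\sqrt{\mat{P}_0})$ is contained in $\mat{w}_0 + \ker(\mat{A}) = V$.

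For the maximality part I would argue that a prior committing to no information beyond $(\mat{A},\mat{b})$-constrainedness must assign positive density to every feasible tensor, so its support must equal all of $V$. Since that support is the affine set $\mat{w}_0 + \mathrm{range}(\sqrt{\mat{P}_0})$ (using $\mathrm{range}(\sqrt{\mat{P}_0}) = \mathrm{range}(\mat{P}_0)$) and $\mat{w}_0 \in V = \mat{w}_0 + \ker(\mat{A})$, the identity $\mat{w}_0 + \mathrm{range}(\sqrt{\mat{P}_0}) = \mat{w}_0 + \ker(\mat{A})$ is equivalent to $\mathrm{range}(\sqrt{\mat{P}_0}) = \ker(\mat{A})$, i.e. the columns of $\sqrt{\mat{P}_0}$ span the right nullspace of $\mat{A}$. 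I would also remark that this pins down $\mat{w}_0$ only up to an element of $\ker(\mat{A})$ and $\mat{P}_0 = \sqrt{\mat{P}_0}(\sqrt{\mat{P}_0})^T$ only up to the choice of a spanning set of columns for $\ker(\mat{A})$; any such choice yields the same support and hence the same modelling content, consistent with the theorem characterizing the prior through a span condition rather than a unique matrix.

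The main obstacle is not computational but a matter of making the modelling requirement precise: the necessity argument alone only yields the inclusion $\mathrm{range}(\sqrt{\mat{P}_0}) \subseteq \ker(\mat{A})$, and it is the explicit stipulation that the prior exclude no feasible $\mat{w}$ — that its support be the \emph{entire} set $V$, the least committal Gaussian consistent with the constraint — that upgrades this to equality. Once that requirement is stated, the remainder is just the standard description of the support of a Gaussian as the image of an affine map, and no further work is needed.
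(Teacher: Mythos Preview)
Your argument is essentially the same as the paper's: both represent a sample as $\mat{w}=\mat{w}_0+\sqrt{\mat{P}_0}\,\mat{x}$ with $\mat{x}\sim\mathcal{N}(\mat{0},\mat{I})$, impose the constraint for all $\mat{x}$, and read off $\mat{A}\mat{w}_0=\mat{b}$ and $\mat{A}\sqrt{\mat{P}_0}=\mat{0}$. The one difference is that you go further than the paper does. The paper's proof stops at $\mat{A}\sqrt{\mat{P}_0}=\mat{0}$ and then asserts that the columns ``span'' the nullspace, whereas strictly speaking that equation only gives the inclusion $\mathrm{range}(\sqrt{\mat{P}_0})\subseteq\ker(\mat{A})$; you correctly identify this gap and close it with the maximality argument (the prior should exclude no feasible tensor, hence the support must be all of $V$). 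So your version is the same proof, made a bit more honest about what the sampling argument actually establishes versus what the modelling intent supplies.
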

\begin{proof}
Let $\mat{x} \in \mathbb{R}^{J_1\ldots J_D}$ be a sample of the standard normal distribution $\mathcal{N}(\mat{0},\mat{I})$. A sample $\mat{w}$ of the desired Gaussian distribution is then
\begin{align*}
        \mat{w} &= \mat{w}_0 + \sqrt{\mat{P}_0}\;\mat{x},
\end{align*}
where $\sqrt{\mat{P}_0}$ is the matrix square root of the covariance matrix $\mat{P}_0$. Any sample $\mat{w}$ being an $(\mat{A},\mat{b})$-constrained tensor implies
\begin{align}
\label{eq:constraint}
    \mat{A}\;\mat{w} =     \mat{A}\;\mat{w}_0 + \mat{A}\;\sqrt{\mat{P}_0}\;\mat{x} &= \mat{b}.\end{align}
Equation~\eqref{eq:constraint} can only be true for all random samples $\mat{x}$ if and only if
\begin{align*}
    \mat{A}\;\mat{w}_0 &= \mat{b}, \\
    \mat{A}\;\sqrt{\mat{P}_0} &= \mat{0}.
\end{align*}
In other words, the mean $\mat{w}_0$ of the prior also has to satisfy the linear constraint and the columns of $\sqrt{\mat{P}_0}$ span the right nullspace of $\mat{A}$.    
\end{proof}
% This constraint is equivalent with the square root of the desired covariance matrix $\sqrt{\mat{P}_0}$ satisfying
% \begin{align*}
%     \mat{A}\,\sqrt{\mat{P}_0}\,\mat{x} &= \sqrt{\mat{P}_0}\,\mat{x}, \quad \mat{x} \sim \mathcal{N}(\mat{0},\mat{I})
% \end{align*}
% which can be rewritten as
% \begin{align}
% \label{eq:sqCcondition}
%     \left(\mat{I}-\mat{P}\right) \,\sqrt{\mat{P}_0}\,\mat{x} &= \mat{0}
% \end{align}
% and needs to be true for all $\mat{x}$ sampled from the standard normal distribution. It therefore follows that the columns of $\sqrt{\mat{P}_0}$ span the right nullspace of $\mat{I}-\mat{P}$. 

% The nullspace approach  is therefore more flexible compared to the construction of Theorem~\ref{theo:covar1} as it contains an explicit description of the covariances of the $R$ parameters of the structured tensor. Indeed, the covariance matrix $\sigma_{\mat{w}}^2\,\mat{P}_0$ of Theorem~\ref{theo:covar1} is equivalent with choosing $\mat{T}=\sigma_{\mat{w}}\,\mat{I}$.

% \subsection{Permutation-invariance}
% It is surprising that when $\mat{P}$ is a permutation matrix that in the explicit construction of the covariance matrix $\mat{P}_0$ not only the permutation matrix $\mat{P}$ but also all its powers up to $\mat{P}^K$ are necessary. The nullspace approach only requires the computation of the nullspace of $\mat{I}-\mat{P}$.

\subsection{Recursive nullspace computation}
When the matrix $\mat{A}$ is too large to construct explicitly then it is beneficial to compute a basis for its right nullspace recursively. This is possible when considering a partitioning into $S$ block-rows $\mat{A}=\begin{pmatrix} \mat{A}_1^T & \mat{A}_2^T & \hdots & \mat{A}_S^T
    \end{pmatrix}^T.$
% \begin{align*} \mat{A} &=
%     \begin{pmatrix}
%         \mat{A}_1^T & \mat{A}_2^T & \hdots & \mat{A}_S^T
%     \end{pmatrix}^T.
% \end{align*}
Algorithm~\ref{alg:multipleconstraints} recursively computes a basis for this nullspace without ever explicitly constructing $\mat{A}$ using Theorem 6.4.1 from~\cite[p.~329]{golub2013matrix}.
\begin{algorithm}
\caption{Compute basis for nullspace ${\mat{V}_2}$ for block-row partitioned $\mat{A}$ matrix}
\label{alg:multipleconstraints}
\begin{algorithmic}
\REQUIRE $\mat{A}_1,\mat{A}_2,\ldots,\mat{A}_S$
\STATE{$\mat{V}_2 \gets \textrm{null}(\mat{A}_1)$}
\FOR{$s=2:S$}
\STATE{$\mat{Z}_{s} \gets \textrm{null}(\mat{A}_s\,\mat{V}_2)$}
\STATE{$\mat{V}_2 \gets \mat{V}_2\,\mat{Z}_s$}
\ENDFOR
\RETURN $\mat{V}_2$
\end{algorithmic}
\end{algorithm}

\section{Explicit covariance matrix construction for permutation-invariant tensors}
\label{sec:explicit}
Computing the covariance matrix $\mat{P}_0$ via Theorem~\ref{theo:nullspace} requires a basis for the nullspace of $\mat{A}$. For $\mat{P}$-invariant tensors it is possible to derive an explicit formula for $\mat{P}_0$ as a function of the permutation matrix $\mat{P}$, which enables efficient sampling of the prior. Before we can state the main result in Theorem~\ref{theo:covar1}, we first need to discuss some facts about permutation matrices. An important concept tied to permutation matrices is its order. Any permutation can be written as a product of disjoint cycles. Each cycle has a particular length, also called the order of the cycle. In this article $K$ will denote the least common multiple of all orders of disjoint cycles of a given permutation.
\begin{definition}
The order $K \in \mathbb{N}$ of a permutation matrix $\mat{P}$ is defined as the smallest natural number such that $\mat{P}^K=\mat{I}$.   
\end{definition}
Skew-$\mat{P}$-invariant structures always have an even order $K$.
\begin{lemma}
\label{lemma:skewPeven}
A skew-$\mat{P}$-invariant structure has an even order $K$.    
\end{lemma}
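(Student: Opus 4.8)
The plan is to argue directly from the defining relation $\mat{P}\,\mat{w} = -\mat{w}$ together with the identity $\mat{P}^{K}=\mat{I}$. First I would make explicit the (implicit) nontriviality assumption: a skew-$\mat{P}$-invariant structure is only meaningful when there actually exists a nonzero tensor $\ten{W}$ whose vectorization $\mat{w}$ satisfies $\mat{P}\,\mat{w}=-\mat{w}$, i.e.\ when $-1$ is an eigenvalue of $\mat{P}$. I would fix such a vector $\mat{w}\neq\mat{0}$.

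Next I would iterate the relation. A one-line induction on $m$ starting from $\mat{P}\,\mat{w}=-\mat{w}$ gives $\mat{P}^{m}\mat{w}=(-1)^{m}\mat{w}$ for every $m\in\mathbb{N}$. Specializing to $m=K$ and invoking the definition of the order, $\mat{P}^{K}=\mat{I}$, yields $\mat{w}=\mat{P}^{K}\mat{w}=(-1)^{K}\mat{w}$, hence $\big(1-(-1)^{K}\big)\mat{w}=\mat{0}$. Since $\mat{w}\neq\mat{0}$ this forces $(-1)^{K}=1$, so $K$ is even.

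An alternative and equivalent route goes through the cycle decomposition of $\mat{P}$: the eigenvalues of a permutation matrix are exactly the $\ell$-th roots of unity as $\ell$ ranges over the lengths of its disjoint cycles, so $-1$ can be an eigenvalue only if at least one cycle has even length, in which case $K$, being the least common multiple of the cycle lengths, is even. Either way the argument is short; the only point that deserves care is flagging the nontriviality hypothesis up front, since without a nonzero skew-$\mat{P}$-invariant tensor the statement would be vacuous. I do not expect any real obstacle here.
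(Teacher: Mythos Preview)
Your proof is correct and is essentially the same as the paper's: both iterate $\mat{P}\mat{w}=-\mat{w}$ to obtain $\mat{P}^{K}\mat{w}=(-1)^{K}\mat{w}$, combine with $\mat{P}^{K}=\mat{I}$, and conclude $(-1)^{K}=1$. You are simply more explicit than the paper about the induction and about the nontriviality hypothesis $\mat{w}\neq\mat{0}$; the cycle-decomposition alternative you mention is an extra but not needed.
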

\begin{proof}
Skew-$\mat{P}$-invariance requires per definition that $\lambda=-1$. From $\mat{P}^K\,\mat{w} = \mat{I}\,\mat{w} =  (-1)^K \, \mat{w}$ it follows that $(-1)^K=1$, which proves the desired.
% Skew-$\mat{P}$-invariance requires per definition that $\lambda=-1$. From $\mat{P}^K\,\mat{w} = \mat{I}\,\mat{w} =  \lambda^K \, \mat{w}$ it follows that $\lambda^K=1$. The eigenvalues are therefore the $K$th roots of unity, $e^{\frac{(k-1)2\pi}{K}} (1 \leq k \leq K)$. Solving $\frac{(k-1)2\pi}{K}=\pi$ for $K$ results in $K=2(k-1)$, which proves the desired.
\end{proof}
Theorem~\ref{theo:covar1} will express the desired covariance matrix $\mat{P}_0$ as a function of powers of the permutation matrix $\mat{P}$. 
The following two lemmas relating powers of permutation matrices are easily proved.
\begin{lemma}
\label{cor:powersofP}
Let $\mat{P}$ be a permutation matrix of order $K$, then for any $1 \leq k \leq K $:
\begin{align}
    \mat{P}^k &= \mat{P}^{K+k} .
\end{align}
\end{lemma}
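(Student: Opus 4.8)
The plan is to reduce this to the defining property of the order, namely $\mat{P}^{K}=\mat{I}$, combined with the exponent law $\mat{P}^{a+b}=\mat{P}^{a}\mat{P}^{b}$ for nonnegative integer powers of a square matrix (a consequence of associativity of matrix multiplication). Concretely, I would first split the exponent as
\begin{align*}
    \mat{P}^{K+k} &= \mat{P}^{K}\,\mat{P}^{k},
\end{align*}
then substitute $\mat{P}^{K}=\mat{I}$ from the definition of the order to obtain $\mat{P}^{K+k}=\mat{I}\,\mat{P}^{k}=\mat{P}^{k}$, which is exactly the claim.

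Note that the hypothesis $1\le k\le K$ is not actually needed for the identity to hold — the argument works for every nonnegative integer $k$, and in fact for every integer $k$ once one recalls that $\mat{P}$ is orthogonal so $\mat{P}^{-1}=\mat{P}^{T}$ is again a permutation matrix. The restricted range is stated only because that is the regime in which Theorem~\ref{theo:covar1} will invoke the lemma, so I would keep the statement as is rather than prove a more general version.

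There is no real obstacle here: the only "content" is unwinding the definition of $K$ and using that powers of a matrix add exponents. The proof is a one-line calculation, and I would present it as such.
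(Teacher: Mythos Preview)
Your proof is correct and matches the paper's own justification exactly: the paper simply states that the lemma ``follows from $\mat{P}^K=\mat{I}$,'' which is precisely the one-line computation you give.
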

\begin{lemma}
\label{cor:Ptransp}
Let $\mat{P}$ be a permutation matrix of order $K$, then for any $1 \leq k \leq K $:\begin{align}
    \mat{P}^{K-k} &= \left( \mat{P}^{k} \right)^T.
\end{align}
\end{lemma}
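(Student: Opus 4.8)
The plan is to exploit the single structural fact that a permutation matrix is orthogonal, i.e.\ $\mat{P}\,\mat{P}^T = \mat{P}^T\mat{P} = \mat{I}$, so that $\mat{P}^T = \mat{P}^{-1}$. This is the only nontrivial ingredient; the rest is bookkeeping with the relation $\mat{P}^K = \mat{I}$ that defines the order $K$, exactly as in Lemma~\ref{cor:powersofP}.

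First I would fix $k$ with $1 \le k \le K$ and observe that $\mat{P}^{K-k}$ and $\mat{P}^k$ are mutually inverse: multiplying them gives $\mat{P}^{K-k}\,\mat{P}^k = \mat{P}^K = \mat{I}$, and likewise $\mat{P}^k\,\mat{P}^{K-k} = \mat{I}$. Hence $\mat{P}^{K-k} = \left(\mat{P}^k\right)^{-1}$. Next I would use orthogonality of $\mat{P}$, which is inherited by every power: since $\left(\mat{P}^k\right)^T \mat{P}^k = \left(\mat{P}^T\right)^k \mat{P}^k = \mat{I}$ (because $\mat{P}^T = \mat{P}^{-1}$), the matrix $\mat{P}^k$ is itself orthogonal, so $\left(\mat{P}^k\right)^{-1} = \left(\mat{P}^k\right)^T$. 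Combining the two identities yields $\mat{P}^{K-k} = \left(\mat{P}^k\right)^T$, which is the claim. Equivalently, one can write the one-line chain $\left(\mat{P}^k\right)^T = \left(\mat{P}^T\right)^k = \left(\mat{P}^{-1}\right)^k = \mat{P}^{-k} = \mat{P}^{K-k}$, where the last step uses $\mat{P}^K = \mat{I}$.

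I do not anticipate any real obstacle here: the statement is essentially a corollary of the definition of the order together with the orthogonality of permutation matrices, and no case analysis or appeal to cycle structure is needed. The only point worth a sentence of care is the edge case $k = K$, where the identity reads $\mat{P}^0 = \left(\mat{P}^K\right)^T$, i.e.\ $\mat{I} = \mat{I}^T$, which holds trivially; so the range $1 \le k \le K$ stated in the lemma causes no difficulty.
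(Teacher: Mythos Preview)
Your proposal is correct and follows essentially the same approach as the paper, which simply remarks that the lemma follows from the orthogonality of permutation matrices and the fact that powers of permutation matrices are again permutation matrices. Your one-line chain $\left(\mat{P}^k\right)^T = \left(\mat{P}^T\right)^k = \left(\mat{P}^{-1}\right)^k = \mat{P}^{-k} = \mat{P}^{K-k}$ is exactly this argument made explicit.
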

Lemma~\ref{cor:powersofP} follows from $\mat{P}^K=\mat{I}$. Lemma~\ref{cor:Ptransp} follows from the orthogonality of permutation matrices and from the fact that powers of permutation matrices are still permutation matrices. We now have all ingredients to describe the main result that provides an analytic solution for the covariance matrix $\mat{P}_0$ as an average over powers of the permutation matrix $P$.
\begin{theorem}
\label{theo:covar1}
Let $\mat{P}$ be a permutation matrix of order $K$. The Gaussian distribution of $\mat{P}$-invariant tensors $\mathcal{N}(\mat{w}_0,\mat{P}_0)$ is described by a mean vector $\mat{w}_0$ that is $\mat{P}$-invariant and covariance matrix 
\begin{align}
    \label{eq:covar1}
    \mat{P}_0 &= \frac{\mat{P} + \mat{P}^2 + \cdots +\mat{P}^K}{K}.
\end{align}
\end{theorem}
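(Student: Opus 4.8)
The plan is to verify the two conditions of Theorem~\ref{theo:nullspace}: that the proposed mean $\mat{w}_0$ is $\mat{P}$-invariant (which is given by hypothesis and the constraint is $(\mat{I}-\mat{P})\mat{w}_0 = \mat{0}$), and that the columns of $\sqrt{\mat{P}_0}$ span the right nullspace of $\mat{A} = \mat{I}-\mat{P}$. The key observation is that the proposed matrix $\mat{P}_0 = \frac{1}{K}(\mat{P}+\mat{P}^2+\cdots+\mat{P}^K)$ is, up to the additive $\mat{I} = \mat{P}^K$ term, the arithmetic mean of the orbit of $\mat{P}$, so it behaves like an orthogonal projector onto the fixed space of $\mat{P}$. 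I would first show $\mat{P}_0$ is symmetric: by Lemma~\ref{cor:Ptransp}, $(\mat{P}^k)^T = \mat{P}^{K-k}$, so transposing the sum just permutes its terms. Then I would show $\mat{P}_0$ is idempotent: expanding $\mat{P}_0^2 = \frac{1}{K^2}\sum_{i=1}^K\sum_{j=1}^K \mat{P}^{i+j}$ and using Lemma~\ref{cor:powersofP} ($\mat{P}^{i+j} = \mat{P}^{(i+j) \bmod K}$, with the convention that a zero residue gives $\mat{P}^K = \mat{I}$), each residue class is hit exactly $K$ times as $(i,j)$ ranges over the square, giving $\mat{P}_0^2 = \frac{1}{K^2}\cdot K\sum_{k=1}^K \mat{P}^k = \mat{P}_0$. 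A symmetric idempotent is positive semidefinite and equals its own square root in the sense $\mat{P}_0 = \mat{P}_0\,\mat{P}_0^T$, so we may take $\sqrt{\mat{P}_0} = \mat{P}_0$.

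It remains to check that $\mathrm{range}(\mat{P}_0) = \ker(\mat{I}-\mat{P})$, i.e. that the column space of $\mat{P}_0$ is exactly the set of $\mat{P}$-invariant vectors. For the inclusion $\subseteq$: compute $(\mat{I}-\mat{P})\mat{P}_0 = \frac{1}{K}\big((\mat{P}+\cdots+\mat{P}^K) - (\mat{P}^2+\cdots+\mat{P}^{K+1})\big) = \frac{1}{K}(\mat{P} - \mat{P}^{K+1}) = \frac{1}{K}(\mat{P}-\mat{P}) = \mat{0}$, using Lemma~\ref{cor:powersofP}, so every column of $\mat{P}_0$ lies in the nullspace of $\mat{I}-\mat{P}$. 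For the reverse inclusion: if $\mat{P}\mat{v} = \mat{v}$, then $\mat{P}^k\mat{v} = \mat{v}$ for all $k$, hence $\mat{P}_0\mat{v} = \frac{1}{K}(K\mat{v}) = \mat{v}$, so $\mat{v} \in \mathrm{range}(\mat{P}_0)$. Since $\sqrt{\mat{P}_0} = \mat{P}_0$ has the same column space as $\mat{P}_0$, its columns span $\ker(\mat{I}-\mat{P}) = \ker\mat{A}$, which is precisely what Theorem~\ref{theo:nullspace} requires.

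The only genuinely delicate point is the bookkeeping in the idempotency argument — making sure the indexing convention ($\mat{P}^0$ being identified with $\mat{P}^K = \mat{I}$ rather than appearing as a separate term, and the sum running from $1$ to $K$ rather than $0$ to $K-1$) is handled consistently so that each residue class modulo $K$ is counted exactly $K$ times in the double sum. Everything else is routine manipulation of the two lemmas on powers of permutation matrices. One could alternatively phrase the whole proof spectrally — $\mat{P}$ is orthogonally diagonalizable over $\mathbb{C}$ with eigenvalues that are $K$-th roots of unity, and $\frac{1}{K}\sum_{k=1}^K \lambda^k$ equals $1$ when $\lambda=1$ and $0$ otherwise — but the direct matrix-algebra argument above avoids complex arithmetic and is self-contained given the stated lemmas, so that is the route I would take.
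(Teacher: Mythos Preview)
Your proof is correct and follows essentially the same route as the paper: establish symmetry via Lemma~\ref{cor:Ptransp}, idempotency via Lemma~\ref{cor:powersofP}, conclude that $\mat{P}_0$ is positive semidefinite and its own square root, and then verify that $\mat{P}\,\mat{P}_0 = \mat{P}_0$ so that every sample is $\mat{P}$-invariant. Your argument is in fact slightly tighter than the paper's, since you also check the reverse inclusion $\ker(\mat{I}-\mat{P}) \subseteq \mathrm{range}(\mat{P}_0)$ explicitly (via $\mat{P}_0\mat{v}=\mat{v}$ for fixed vectors $\mat{v}$), whereas the paper's Lemma~\ref{lemma:Pinvariance} only verifies the forward direction; the paper also includes a separate lemma on positive diagonal entries, which is redundant once positive semidefiniteness is established.
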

The $\mat{P}$-invariance of the mean $\mat{w}_0$ follows directly from Theorem~\ref{theo:nullspace}. The proof of Theorem~\ref{theo:covar1} therefore requires showing that $\mat{P}_0$ in~\eqref{eq:covar1} is the desired covariance matrix. A matrix $\mat{P}_0$ is a covariance matrix if it satisfies the following three sufficient conditions:
\begin{enumerate}
    \item has positive diagonal entries,
    \item is symmetric,
    \item is positive (semi-)definite.
\end{enumerate}
Short proofs will now be given for each of these three covariance conditions.
\begin{lemma}
\label{lemma:posdiag}
The matrix $\mat{P}_0$ has positive diagonal entries.    
\end{lemma}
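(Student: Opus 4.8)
The plan is to read off the diagonal entries of $\mat{P}_0$ directly from the definition~\eqref{eq:covar1}. Writing $\mat{P}_0 = \frac{1}{K}\sum_{k=1}^{K}\mat{P}^k$, the $(i,i)$ diagonal entry of $\mat{P}_0$ is the average of the $(i,i)$ entries of the $K$ matrices $\mat{P},\mat{P}^2,\dots,\mat{P}^K$. So it suffices to bound these $K$ diagonal contributions from below.

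The key observation is that each power $\mat{P}^k$ is itself a permutation matrix, hence has all entries in $\{0,1\}$; in particular every diagonal entry of every $\mat{P}^k$ is nonnegative. Moreover, the last term in the sum is $\mat{P}^K$, which by the definition of the order $K$ of $\mat{P}$ equals the identity matrix $\mat{I}$, whose diagonal entries are all equal to $1$. Combining these two facts, for every index $i$ we get
\begin{align*}
    (\mat{P}_0)_{ii} \;=\; \frac{1}{K}\sum_{k=1}^{K}(\mat{P}^k)_{ii} \;\geq\; \frac{1}{K}(\mat{P}^K)_{ii} \;=\; \frac{1}{K} \;>\; 0,
\end{align*}
which is exactly the claim.

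There is essentially no obstacle here; the only point worth stating explicitly is that a power of a permutation matrix is again a permutation matrix (so its entries are nonnegative), together with $\mat{P}^K=\mat{I}$ from the definition of order. I would present the argument in the two displayed lines above and keep the proof to a couple of sentences.
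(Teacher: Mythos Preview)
Your proof is correct and follows essentially the same approach as the paper: each $\mat{P}^k$ is a permutation matrix (hence has nonnegative entries), and the final term $\mat{P}^K=\mat{I}$ forces every diagonal entry of the sum to be strictly positive. Your version is slightly more explicit in writing out the inequality $(\mat{P}_0)_{ii}\geq 1/K$, but the underlying argument is identical.
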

\begin{proof}
$\mat{P}_0$ is per definition a sum of permutation matrices, all diagonal entries of $\mat{P}_0$ are therefore either zero or positive. Since $\mat{P}^K=\mat{I}$ we have that the diagonal entries are guaranteed to be positive.    
\end{proof}
\begin{lemma}
The matrix $\mat{P}_0$ is symmetric.    
\end{lemma}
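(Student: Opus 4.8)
The plan is to compute $\mat{P}_0^{T}$ term by term and recognize it as a reordering of the sum that defines $\mat{P}_0$. Since transposition is linear, $\mat{P}_0^{T} = \frac{1}{K}\sum_{k=1}^{K}(\mat{P}^{k})^{T}$. First I would apply Lemma~\ref{cor:Ptransp} to each summand, which yields $(\mat{P}^{k})^{T} = \mat{P}^{K-k}$ for every $1 \le k \le K$; for $k=K$ this reads $(\mat{P}^{K})^{T} = \mat{P}^{0} = \mat{I}$.

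Next I would observe that the substitution $k \mapsto K-k$ is an involution, hence a bijection, of $\{1,\dots,K-1\}$ onto itself, so the terms with $1 \le k \le K-1$ appearing in $\sum_{k=1}^{K}(\mat{P}^{k})^{T}$ are exactly the terms $\mat{P}^{1},\dots,\mat{P}^{K-1}$ of $\sum_{k=1}^{K}\mat{P}^{k}$, merely listed in a different order. The only endpoint to track is that the transposed sum contributes $\mat{P}^{0}=\mat{I}$ in place of the original $\mat{P}^{K}$, and $\mat{P}^{K}=\mat{I}$ by definition of the order $K$ (equivalently by Lemma~\ref{cor:powersofP}). Therefore $\sum_{k=1}^{K}(\mat{P}^{k})^{T} = \sum_{k=1}^{K}\mat{P}^{k}$, and dividing by $K$ gives $\mat{P}_0^{T} = \mat{P}_0$.

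There is no real obstacle here: the content of the argument is simply that the set $\{\mat{P},\mat{P}^{2},\dots,\mat{P}^{K}\}$ is closed under transposition, being the cyclic group generated by $\mat{P}$. The only mildly delicate point is making the reindexing $k \mapsto K-k$ rigorous at the boundary values, which is precisely what Lemmas~\ref{cor:powersofP} and~\ref{cor:Ptransp} are set up to handle.
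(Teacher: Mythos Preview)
Your argument is correct and is essentially the same as the paper's: both compute $\mat{P}_0^{T}$ termwise, invoke Lemma~\ref{cor:Ptransp} to replace $(\mat{P}^{k})^{T}$ by $\mat{P}^{K-k}$, and observe that the resulting sum is a reordering of the original one (with $\mat{P}^{0}=\mat{P}^{K}=\mat{I}$ at the endpoint). The paper presents this in a single displayed chain of equalities, while you spell out the reindexing bijection more explicitly, but the content is identical.
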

\begin{proof}
The symmetry of $\mat{P}_0$ follows from
\begin{align*}
    \mat{P}_0^T &= \frac{\mat{P}^T + (\mat{P}^2)^T + \cdots + (\mat{P}^{K-1})^T  + (\mat{P}^K)^T}{K}, \\
    &= \frac{\mat{P}^{K-1} + \mat{P}^{K-2} + \cdots + \mat{P} + \mat{P}^K}{K}, \\
    &= \mat{\mat{P}}_0,
\end{align*}
where the second line follows from Lemma~\ref{cor:Ptransp}.    
\end{proof}
The semi-positive definiteness of $\mat{P}_0$ follows from its idempotency.
\begin{lemma}
The matrix $\mat{P}_0$ is idempotent, that is $\mat{P}_0^2 = \mat{P}_0$.
\end{lemma}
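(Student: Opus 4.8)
The plan is to expand the square $\mat{P}_0^2$ as a double sum over powers of $\mat{P}$ and then collapse the inner sum using the periodicity of these powers recorded in Lemma~\ref{cor:powersofP}. Writing $\mat{P}_0 = \frac{1}{K}\sum_{k=1}^{K}\mat{P}^k$, one obtains
\begin{align*}
\mat{P}_0^2 &= \frac{1}{K^2}\sum_{j=1}^{K}\sum_{k=1}^{K}\mat{P}^{j+k}.
\end{align*}

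First I would fix the outer index $j$ and inspect the inner sum $\sum_{k=1}^{K}\mat{P}^{j+k}$. As $k$ ranges over $1,\ldots,K$ the exponent $j+k$ runs through the $K$ consecutive integers $j+1,\ldots,j+K$. Since $\mat{P}^K=\mat{I}$, Lemma~\ref{cor:powersofP} gives $\mat{P}^m = \mat{P}^{m-K}$ whenever $m>K$, so reducing each exponent modulo $K$ into the range $\{1,\ldots,K\}$ shows that the multiset $\{\mat{P}^{j+1},\ldots,\mat{P}^{j+K}\}$ is nothing but $\{\mat{P}^{1},\ldots,\mat{P}^{K}\}$ reordered. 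Hence the inner sum equals $\sum_{k=1}^{K}\mat{P}^{k}=K\,\mat{P}_0$, independently of $j$. Substituting this back yields
\begin{align*}
\mat{P}_0^2 &= \frac{1}{K^2}\sum_{j=1}^{K} K\,\mat{P}_0 = \frac{K\cdot K}{K^2}\,\mat{P}_0 = \mat{P}_0,
\end{align*}
which is the assertion.

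There is no genuine obstacle here; the only step demanding a little care is the bookkeeping argument that a cyclic shift of the exponent set leaves the sum of powers unchanged, which is an immediate consequence of $\mat{P}^K=\mat{I}$. Once idempotency is established, combining it with the symmetry of $\mat{P}_0$ proved in the previous lemma shows that $\mat{P}_0$ is an orthogonal projector, so its eigenvalues are $0$ and $1$; this delivers the positive semidefiniteness and thereby completes the verification of the three covariance conditions and the proof of Theorem~\ref{theo:covar1}.
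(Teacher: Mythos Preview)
Your proof is correct and follows essentially the same approach as the paper: both expand $\mat{P}_0^2$ as a double sum over powers of $\mat{P}$ and use the periodicity $\mat{P}^K=\mat{I}$ (Lemma~\ref{cor:powersofP}) to show every power $\mat{P}^k$ appears exactly $K$ times. Your organization---fixing the outer index and observing that the inner sum is a cyclic shift of $\sum_{k=1}^K\mat{P}^k$---is a bit tidier than the paper's explicit pairing of coefficients, but the underlying argument is identical.
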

\begin{proof}
Writing out $(K\mat{P}_0)^2$ in terms of $\mat{P}$ and applying Lemma~\ref{cor:powersofP} results in
 \begin{align*}
 &(\mat{P} + \mat{P}^2 + \cdots + \mat{P}^K)^2,  \\
     &= \mat{P}^2 + 2\;\mat{P}^3 + \cdots +(K-1)\;\mat{P}^{K} + K\;\mat{P}^{K+1} + (K-1)\;\mat{P}^{K+2}+\cdots+2\;\mat{P}^{2K-1}+\mat{P}^{2K}, \\
     &= K\;\mat{P} +\underbrace{\mat{P}^2 + (K-1)\;\mat{P}^{K+2} }_{K\;\mat{P}^{2}} + \cdots + \underbrace{2\;\mat{P}^{2K-1} + (K-2)\;\mat{P}^{K-1} }_{K\;\mat{P}^{K-1}} + \underbrace{(K-1)\;\mat{P}^K+\;\mat{P}^{2K}}_{K\mat{P}^K}, \\
     &= K\;(\mat{P}+\mat{P}^2+\mat{P}^3+\cdots+\mat{P}^K),\\
     &= K^2\;\mat{P}_0,
 \end{align*}    
 which proves that $\mat{P}_0$ is idempotent.
\end{proof}
The first consequence of $\mat{P}_0$ being idempotent is that it is positive semi-definite.
\begin{lemma}
The matrix $\mat{P}_0$ is positive semi-definite.    
\end{lemma}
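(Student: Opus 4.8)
The plan is to combine the two properties of $\mat{P}_0$ already established — symmetry ($\mat{P}_0^T = \mat{P}_0$) and idempotency ($\mat{P}_0^2 = \mat{P}_0$) — which together say that $\mat{P}_0$ is an orthogonal projector, and orthogonal projectors are positive semi-definite. Concretely, I would start from an arbitrary vector $\mat{x} \in \mathbb{R}^{J^D}$ and rewrite the quadratic form $\mat{x}^T \mat{P}_0 \mat{x}$ by inserting one factor of $\mat{P}_0$ via idempotency and then transposing one of the two factors via symmetry, so that $\mat{x}^T \mat{P}_0 \mat{x} = \mat{x}^T \mat{P}_0 \mat{P}_0 \mat{x} = \mat{x}^T \mat{P}_0^T \mat{P}_0 \mat{x} = (\mat{P}_0 \mat{x})^T (\mat{P}_0 \mat{x}) = \| \mat{P}_0 \mat{x} \|_2^2 \geq 0$. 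Since this holds for every $\mat{x}$, $\mat{P}_0$ is positive semi-definite.

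The key steps, in order, are: (i) recall that both the symmetry lemma and the idempotency lemma have already been proved for $\mat{P}_0 = (\mat{P} + \mat{P}^2 + \cdots + \mat{P}^K)/K$; (ii) for arbitrary $\mat{x}$, use idempotency to replace $\mat{P}_0$ by $\mat{P}_0^2$ inside the quadratic form; (iii) use symmetry to turn $\mat{P}_0^2$ into $\mat{P}_0^T \mat{P}_0$; (iv) recognize the result as the squared Euclidean norm of $\mat{P}_0 \mat{x}$, which is nonnegative; (v) conclude positive semi-definiteness. One could alternatively argue spectrally: a symmetric idempotent matrix has all eigenvalues in $\{0,1\}$ (from $\lambda^2 = \lambda$ on the eigenvalues of the symmetric matrix $\mat{P}_0$), hence is positive semi-definite — but the norm argument is shorter and self-contained.

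I do not expect any real obstacle here: the statement is an immediate corollary of the two preceding lemmas, and the only subtlety worth flagging is that both symmetry \emph{and} idempotency are needed — idempotency alone (without symmetry) would not suffice, since a non-symmetric idempotent matrix need not be positive semi-definite. Everything else is a one-line computation.

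\begin{proof}
Let $\mat{x} \in \mathbb{R}^{J^D}$ be arbitrary. Using the idempotency $\mat{P}_0^2 = \mat{P}_0$ and then the symmetry $\mat{P}_0^T = \mat{P}_0$ we obtain
\begin{align*}
    \mat{x}^T \mat{P}_0\, \mat{x} &= \mat{x}^T \mat{P}_0\, \mat{P}_0\, \mat{x} = \mat{x}^T \mat{P}_0^T\, \mat{P}_0\, \mat{x} = (\mat{P}_0\, \mat{x})^T (\mat{P}_0\, \mat{x}) = \| \mat{P}_0\, \mat{x} \|_2^2 \;\geq\; 0.
\end{align*}
Since $\mat{x}$ was arbitrary, $\mat{P}_0$ is positive semi-definite.
\end{proof}
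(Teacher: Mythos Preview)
Your proof is correct. It differs from the paper's argument, which takes the spectral route you mention as an alternative: the paper observes that idempotency forces $\lambda^2=\lambda$ for every eigenvalue, so $\lambda\in\{0,1\}$, and concludes positive semi-definiteness from nonnegativity of the eigenvalues. Your quadratic-form argument has the advantage of making the use of symmetry explicit, whereas the eigenvalue argument silently relies on symmetry to ensure that real eigenvalues in $\{0,1\}$ actually imply $\mat{x}^T\mat{P}_0\mat{x}\ge 0$ (a non-symmetric idempotent can have eigenvalues $\{0,1\}$ without being positive semi-definite). Both proofs are one-liners once symmetry and idempotency are in hand; yours is arguably the more self-contained of the two.
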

\begin{proof}
The two eigenvalue equations
\begin{align*}
    \mat{P}_0 \, \mat{v} =  \lambda \, \mat{v} \quad ,  \quad  (\mat{P}_0)^2 \, \mat{v} =  \lambda^2 \, \mat{v}
\end{align*}
are actually equal due to $\mat{P}_0$ being idempotent. It therefore follows that $\lambda^2-\lambda=0$, which implies that the eigenvalues are either 0 or 1. This proves the positive semi-definiteness of $\mat{P}_0$.
\end{proof}
Having proved that $\mat{P}_0$ is a covariance matrix it remains to show that samples drawn from $\mathcal{N}(\mat{w}_0,\mat{P}_0)$ are $\mat{P}$-invariant. From its symmetry and idempotency it follows that $\mat{P}_0$ is its own matrix square root
$\mat{P}_0 = \sqrt{\mat{P}_0} = \mat{P}_0^T = \sqrt{\mat{P}_0}^T$.
\begin{lemma}
\label{lemma:Pinvariance}
Every sample $\mat{w}$ drawn from $\mathcal{N}(\mat{w}_0,\mat{P}_0)$ is $\mat{P}$-invariant.
\end{lemma}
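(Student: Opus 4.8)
The plan is to reduce the statement to a single algebraic identity about $\mat{P}_0$ using the sampling representation introduced in the proof of Theorem~\ref{theo:nullspace}. Since it was just observed that $\mat{P}_0$ is symmetric and idempotent, it is its own matrix square root, $\sqrt{\mat{P}_0} = \mat{P}_0$, so every sample from $\mathcal{N}(\mat{w}_0,\mat{P}_0)$ can be written as $\mat{w} = \mat{w}_0 + \mat{P}_0\,\mat{x}$ with $\mat{x} \sim \mathcal{N}(\mat{0},\mat{I})$. Hence $\mat{P}$-invariance of $\mat{w}$, i.e.\ $\mat{P}\,\mat{w} = \mat{w}$, amounts to verifying $\mat{P}\,\mat{w}_0 + \mat{P}\,\mat{P}_0\,\mat{x} = \mat{w}_0 + \mat{P}_0\,\mat{x}$.

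The term $\mat{P}\,\mat{w}_0 = \mat{w}_0$ is immediate from Theorem~\ref{theo:nullspace} applied with $\mat{A} = \mat{I} - \mat{P}$, $\mat{b} = \mat{0}$, which forces the prior mean to lie in the nullspace of $\mat{I}-\mat{P}$ and hence to be $\mat{P}$-invariant (this was already noted above). It therefore remains only to show $\mat{P}\,\mat{P}_0 = \mat{P}_0$. Multiplying \eqref{eq:covar1} on the left by $\mat{P}$ and shifting the summation index gives
\begin{align*}
\mat{P}\,\mat{P}_0 = \frac{\mat{P}^2 + \mat{P}^3 + \cdots + \mat{P}^{K} + \mat{P}^{K+1}}{K} = \frac{\mat{P} + \mat{P}^2 + \cdots + \mat{P}^{K}}{K} = \mat{P}_0,
\end{align*}
where the middle equality uses $\mat{P}^{K+1} = \mat{P}$ from Lemma~\ref{cor:powersofP}. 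Combining the two parts yields $\mat{P}\,\mat{w} = \mat{w}$ for every realization of $\mat{x}$, which is exactly the "every sample" claim.

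I do not expect any real obstacle here: the content is a one-line index shift, and the only care needed is to invoke the earlier facts in the correct order — that $\sqrt{\mat{P}_0} = \mat{P}_0$, that $\mat{w}_0$ is $\mat{P}$-invariant by Theorem~\ref{theo:nullspace}, and the periodicity $\mat{P}^{K+1} = \mat{P}$. The argument is deterministic rather than distributional, which matches the phrasing of the lemma (it holds for \emph{every} sample, not merely almost surely); an analogous computation with $\mat{P}\,\mat{P}_0 = -\mat{P}_0$ would cover the skew-$\mat{P}$-invariant case once the corresponding covariance formula is established.
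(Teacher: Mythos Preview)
Your proof is correct and follows essentially the same approach as the paper: write a sample as $\mat{w}_0 + \sqrt{\mat{P}_0}\,\mat{x}$, use the $\mat{P}$-invariance of $\mat{w}_0$, and verify $\mat{P}\,\mat{P}_0 = \mat{P}_0$ via the index shift $\mat{P}^{K+1}=\mat{P}$ from Lemma~\ref{cor:powersofP}. The only cosmetic difference is that you isolate the matrix identity $\mat{P}\,\mat{P}_0 = \mat{P}_0$ explicitly, whereas the paper carries the vector $\mat{x}$ along throughout the chain of equalities.
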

\begin{proof}
A sample $\mat{w}$ from $\mathcal{N}(\mat{w}_0,\mat{P}_0)$ can be drawn by computing
\begin{align*}
    \mat{w} = \mat{w}_0 + \sqrt{\mat{P}_0} \; \mat{x},
\end{align*}
where $\mat{x}$ is drawn from a standard normal distribution $\mathcal{N}(\mat{0},\mat{I})$. The $\mat{P}$-invariance of $\mat{w}$ follows from
\begin{align*}
    \mat{w} &= \mat{P}\;\mat{w}, \\
  \mat{w}_0 + \sqrt{\mat{P}_0} \; \mat{x} &= \mat{P}\;\mat{w}_0 + \mat{P}\;\sqrt{\mat{P}_0} \; \mat{x},\\
   \sqrt{\mat{P}_0} \; \mat{x} &= \mat{P}\;\sqrt{\mat{P}_0} \; \mat{x},\\
 \sqrt{\mat{P}_0} \; \mat{x} &= \mat{P}\;\left(\frac{\mat{P}+\mat{P}^2+\cdots+\mat{P}^{K-1}+\mat{P}^{K}}{K}\right) \; \mat{x},\\
  \sqrt{\mat{P}_0} \; \mat{x} &= \left(\frac{\mat{P}^2+\mat{P}^3+\cdots+\mat{P}^{K}+\mat{P}}{K}\right) \; \mat{x},\\
 \sqrt{\mat{P}_0} \; \mat{x} &= \sqrt{\mat{P}_0} \; \mat{x}.
\end{align*}
The terms that depend on $\mat{w}_0$ cancel due to the $\mat{P}$-invariance of $\mat{w}_0$. Lemma~\ref{cor:powersofP} is used to go from line 4 to line 5.
\end{proof} 
Lemmas~\ref{lemma:posdiag} up to~\ref{lemma:Pinvariance} constitute the proof of Theorem~\ref{theo:covar1}. Another consequence from the idempotency of $\mat{P}_0$ is that this matrix is its own pseudoinverse.
\begin{lemma}
The pseudoinverse $\mat{P}_0^{\dagger}$ satisfies 
\begin{align*}
    \mat{P}_0^{\dagger} &= \mat{P}_0.
\end{align*}    
\end{lemma}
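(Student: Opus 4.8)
The plan is to verify that $\mat{P}_0$ satisfies the four defining Moore--Penrose conditions with $\mat{P}_0$ itself playing the role of $\mat{P}_0^{\dagger}$. Recall that a matrix $\mat{X}$ is the (unique) Moore--Penrose pseudoinverse of $\mat{M}$ precisely when $\mat{M}\mat{X}\mat{M}=\mat{M}$, $\mat{X}\mat{M}\mat{X}=\mat{X}$, $(\mat{M}\mat{X})^T=\mat{M}\mat{X}$, and $(\mat{X}\mat{M})^T=\mat{X}\mat{M}$. So it suffices to check these four identities for $\mat{M}=\mat{X}=\mat{P}_0$.

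First I would record the two properties of $\mat{P}_0$ already established in this section: it is symmetric, $\mat{P}_0^T=\mat{P}_0$, and idempotent, $\mat{P}_0^2=\mat{P}_0$. An immediate consequence of idempotency that will be used repeatedly is $\mat{P}_0^3=\mat{P}_0\,\mat{P}_0^2=\mat{P}_0\,\mat{P}_0=\mat{P}_0$. With these in hand, the first condition $\mat{P}_0\mat{P}_0\mat{P}_0=\mat{P}_0^3=\mat{P}_0$ is immediate, and the second condition $\mat{P}_0\mat{P}_0\mat{P}_0=\mat{P}_0$ is the same statement. For the third condition, $(\mat{P}_0\mat{P}_0)^T=(\mat{P}_0^2)^T=(\mat{P}_0)^T=\mat{P}_0=\mat{P}_0^2=\mat{P}_0\mat{P}_0$, where the symmetry of $\mat{P}_0$ is used in the middle; the fourth condition is identical. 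Hence all four conditions hold, and by uniqueness of the pseudoinverse $\mat{P}_0^{\dagger}=\mat{P}_0$.

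There is essentially no obstacle here: the statement is the standard fact that an orthogonal projector (a symmetric idempotent matrix) is its own pseudoinverse, and every step reduces to substituting $\mat{P}_0^2=\mat{P}_0$ and $\mat{P}_0^T=\mat{P}_0$. The only thing to be careful about is to invoke \emph{both} properties --- symmetry is what makes the two range/symmetry conditions work, while idempotency handles the two "weak inverse" conditions --- and to cite that these two properties were proved in the preceding lemmas of this section rather than reproving them.
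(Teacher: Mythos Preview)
Your proof is correct and follows essentially the same approach as the paper: verify the four Moore--Penrose conditions directly by substituting $\mat{P}_0^{\dagger}=\mat{P}_0$ and appealing to the previously established symmetry and idempotency of $\mat{P}_0$. The paper's version is slightly terser but the logic is identical.
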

\begin{proof}
The pseudoinverse $\mat{P}_0^{\dagger}$ needs to satisfy the following four properties:
% \begin{align*}
%     1.\, \mat{P}_0\mat{P}_0^{\dagger}\mat{P}_0 = \mat{P}_0,\; 2.\, \mat{P}_0^{\dagger}\mat{P}_0\mat{P}_0^{\dagger} = \mat{P}_0^{\dagger},\; 3.\, (\mat{P}_0\mat{P}_0^{\dagger})^T = \mat{P}_0\mat{P}_0^{\dagger},\; 4.\, (\mat{P}_0^{\dagger}\mat{P}_0)^T = \mat{P}_0^{\dagger}\mat{P}_0.
% \end{align*}
\begin{enumerate}
    \item $\mat{P}_0\mat{P}_0^{\dagger}\mat{P}_0 = \mat{P}_0$,
    \item $\mat{P}_0^{\dagger}\mat{P}_0\mat{P}_0^{\dagger} = \mat{P}_0^{\dagger}$,
    \item $(\mat{P}_0\mat{P}_0^{\dagger})^T = \mat{P}_0\mat{P}_0^{\dagger}$,
    \item $(\mat{P}_0^{\dagger}\mat{P}_0)^T = \mat{P}_0^{\dagger}\mat{P}_0$.
\end{enumerate}
All these properties are satisfied when assuming $\mat{P}_0^{\dagger} = \mat{P}_0$ and they follow from the idempotency of $\mat{P}_0$. For example, Properties 1 and 2 follow from
\begin{align*}
\mat{P}_0\mat{P}_0^{\dagger}\mat{P}_0 &=\mat{P}_0^{\dagger}\mat{P}_0\mat{P}_0^{\dagger} = (\mat{P}_0)^3 = \mat{P}_0 = \mat{P}_0^{\dagger}.
\end{align*}
Properties 3 and 4 follow from the symmetry of $\mat{P}_0$.
\end{proof}
The fact that $\mat{P}_0 = \sqrt{\mat{P}_0} = \mat{P}_0^{\dagger} = \sqrt{\mat{P}_0^{\dagger}}$ is convenient for several reasons. First, no explicit $\mat{P}_0^{-1}$ computation is required in equations~\eqref{eq:meanposterior} and~\eqref{eq:covposterior}. Second, sampling $\mathcal{N}(\mat{w}_0,\mat{P}_0)$ can be done without a matrix square-root computation and without any matrix-vector multiplications. Using Theorem~\ref{theo:covar1} the product $\sqrt{\mat{P}_0}\,\mat{x}=\mat{P}_0\,\mat{x}$ can be implemented as a weighted sum of permuted versions of $\mat{x}$
\begin{align*}
\frac{\mat{P}\,\mat{x}+\mat{P}^2\,\mat{x} + \cdots + \mat{P}^K\,\mat{x}}{K}.    
\end{align*}
All information of the permutation $\mat{P}$ is contained in a vector $\mat{p}$ of $I^D$ elements that specifies how each entry gets mapped to the next. Each term $\mat{P}^k\,\mat{x}$ of the weighted sum is then computed by successive permutations of $\mat{x}$ according to $\mat{p}$ with computational complexity $O(I^D)$. The pseudocode for sampling the distribution is given in Algorithm~\ref{alg:fastsampling}. 
\begin{algorithm}
\caption{Generate $\mat{P}$-invariant sample from $\mathcal{N}(\mat{w}_0,\mat{P}_0)$}
\label{alg:fastsampling}
\begin{algorithmic}
\REQUIRE $\mat{w}_0$, index permutation vector $\mat{p}$
\STATE{$\mat{x} \gets \textrm{randn}(I^D)$} \hfill \% sample standard normal $\mathcal{N}(\mat{0},\mat{I})$
\STATE{$\mat{w} \gets K\,\mat{w}_0$}
\FOR{$k=1:K$}
\STATE{$\mat{w} \gets \mat{w} + \mat{x}$}
\STATE{$\mat{x} \gets \mat{x}[\mat{p}]$} \hfill \% permute entries of $\mat{x}$ according to $\mat{p}$
\ENDFOR
\STATE{$\mat{w} \gets \frac{\mat{w}}{K}$}
\RETURN $\mat{w}$
\end{algorithmic}
\end{algorithm}

A similar result as in Theorem~\ref{theo:covar1} can be proven for $\mat{P}$-skew-invariant tensors.
\begin{theorem}
\label{theo:skewcovar}
For a permutation of even order $K$, the Gaussian distribution of $\mat{P}$-skew-invariant tensors $\mathcal{N}(\mat{w}_0,\mat{P}_0)$ is described by a mean vector $\mat{w}_0$ that is $\mat{P}$-skew-invariant and covariance matrix
\begin{align}
    \label{eq:skewcovar}
    \mat{P}_0 &:= \frac{-\mat{P} + \mat{P}^2 - \cdots +\mat{P}^K}{K} = \frac{\sum_{k=1}^K\;(-1)^k \,\mat{P}^{k}}{K}.
\end{align}
\end{theorem}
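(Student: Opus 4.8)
The plan is to mirror the proof of Theorem~\ref{theo:covar1}. The skew-$\mat{P}$-invariance of $\mat{w}_0$ is immediate from Theorem~\ref{theo:nullspace}: here $\mat{A}=-\mat{I}-\mat{P}$ and $\mat{b}=\mat{0}$, so $\mat{w}_0$ lies in the right nullspace of $\mat{A}$, which is exactly the skew-$\mat{P}$-invariance condition of Definition~\ref{def:Pinvar}. It then remains to check that $\mat{P}_0$ in~\eqref{eq:skewcovar} is a genuine covariance matrix and that every sample $\mat{w}=\mat{w}_0+\sqrt{\mat{P}_0}\,\mat{x}$ is skew-$\mat{P}$-invariant. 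I would establish, in this order: (i) $\mat{P}_0$ is symmetric; (ii) $\mat{P}_0$ is idempotent; (iii) $\mat{P}_0$ is positive semi-definite; (iv) $\mat{P}_0=\sqrt{\mat{P}_0}=\mat{P}_0^T$; and (v) $\mat{P}\,\mat{P}_0=-\mat{P}_0$, which yields the skew-invariance of samples.

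For symmetry, apply Lemma~\ref{cor:Ptransp} termwise, $\mat{P}_0^T=\frac1K\sum_{k=1}^K(-1)^k\mat{P}^{K-k}$, then reindex $k\mapsto K-k$; since $K$ is even (Lemma~\ref{lemma:skewPeven}) we have $(-1)^{K-k}=(-1)^k$, and $\mat{P}^{0}=\mat{P}^{K}$ by Lemma~\ref{cor:powersofP}, so the sum returns to $\mat{P}_0$. For idempotency, expand $(K\mat{P}_0)^2=\sum_{k,l=1}^K(-1)^{k+l}\mat{P}^{k+l}$ and, for each fixed $m\in\{1,\dots,K\}$, collect the $K$ pairs $(k,l)$ with $k+l\equiv m \pmod K$; because $K$ is even, $(-1)^{k+l}=(-1)^m$ for every such pair (whether $k+l=m$ or $k+l=m+K$), so the coefficient of $\mat{P}^m$ is $K(-1)^m$ and $(K\mat{P}_0)^2=K^2\mat{P}_0$. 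Positive semi-definiteness then follows exactly as in Theorem~\ref{theo:covar1}: $\mat{P}_0$ is symmetric and idempotent, so its eigenvalues satisfy $\lambda^2=\lambda$ and lie in $\{0,1\}$. Symmetry plus idempotency also gives $\mat{P}_0=\sqrt{\mat{P}_0}$, so sampling again needs no matrix square root and can be implemented as a signed weighted sum of permuted copies of $\mat{x}$ in the spirit of Algorithm~\ref{alg:fastsampling}. Finally, the identity $\mat{P}\,\mat{P}_0=-\mat{P}_0$ follows by shifting the index in $\mat{P}\sum_{k=1}^K(-1)^k\mat{P}^k=\sum_{k=1}^K(-1)^k\mat{P}^{k+1}$ and using $\mat{P}^{K+1}=\mat{P}$; hence $\mat{P}\,\mat{w}=\mat{P}\,\mat{w}_0+\mat{P}\,\mat{P}_0\,\mat{x}=-\mat{w}_0-\mat{P}_0\,\mat{x}=-\mat{w}$, using the skew-invariance of $\mat{w}_0$.

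The one genuine point of departure from the $\mat{P}$-invariant case is the diagonal. For an index lying in a cycle of length $\ell\mid K$, the diagonal entry of $K\mat{P}_0$ equals $\sum_{m=1}^{K/\ell}(-1)^{m\ell}$. If $\ell$ is even this is $K/\ell>0$; if $\ell$ is odd it equals $\sum_{m=1}^{K/\ell}(-1)^m=0$, because $K/\ell$ is even when $K$ is even and $\ell$ is odd. This zero variance is not a flaw: skew-$\mat{P}$-invariance forces every entry sitting in an odd-length cycle to vanish deterministically (following such a cycle gives $w_i=(-1)^{\ell}w_i=-w_i$), so the covariance must indeed be zero there. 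Thus the diagonal of $\mat{P}_0$ is nonnegative, which together with symmetry and positive semi-definiteness is all that is required. I expect this diagonal/odd-cycle bookkeeping, together with the repeated reliance on the evenness of $K$ in the reindexing arguments, to be the only subtle points; the remainder is a direct adaptation of the lemmas that establish Theorem~\ref{theo:covar1}.
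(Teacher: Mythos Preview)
Your proposal is correct and follows essentially the same route as the paper: mirror the proof of Theorem~\ref{theo:covar1} by checking nonnegativity of the diagonal via the cycle decomposition, symmetry via Lemma~\ref{cor:Ptransp} together with the evenness of $K$, idempotency by expanding $(K\mat{P}_0)^2$, and positive semi-definiteness from idempotency. Your idempotency argument (grouping the $K$ pairs $(k,l)$ with $k+l\equiv m\pmod K$) is a cleaner repackaging of the paper's explicit expansion, and you additionally spell out $\mat{P}\,\mat{P}_0=-\mat{P}_0$ to verify skew-invariance of samples, which the paper leaves implicit in the skew case.
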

\begin{proof}
The proof is very similar to that of Theorem~\ref{theo:covar1}. The diagonal entries being nonnegative can be derived from the following argument. The permutation matrix $\mat{P}$ itself consists of cyclic permutations, with either even or odd order. If a cyclic permutation has an even order $k$, then $\mat{P}^k$ will have ones on the diagonal for elements of the cycle. This cycle will occur $K/k$ times in~\eqref{eq:skewcovar}, always with a positive sign. If a cyclic permutation has odd order $k$, then the diagonal entries of $\mat{P}^k$ will come in equal amounts of $K/(2k)$ negative and $K/(2k)$ positive contributions, which results in a zero contribution to the diagonal. The total effect of all cyclic permutations then add up to either zero or positive diagonal entries.
Symmetry is proven by using Corollary~\ref{cor:Ptransp} and the fact that $K$ is even: an even order $k$ gets mapped to another even order $K-k$ and an odd order $k$ gets mapped to and odd order $K-k$. Hence, 
    \begin{align*}
        \mat{P}_0^T &= \frac{\sum_{k=1}^K\;(-1)^k \,(\mat{P}^{k})^T}{K} = \frac{\sum_{k=1}^K\;(-1)^k \,\mat{P}^{K-k}}{K} =\mat{P}_0.
    \end{align*}

The idempotency of $\mat{P}_0$ follows a similar proof as for the case of $\mat{P}$-invariance. Writing out $(K\mat{P}_0)^2$ in terms of $\mat{P}$ and applying Corollary~\ref{cor:powersofP} results in
 \begin{align*}
     &(-\mat{P} + \mat{P}^2 - \cdots + \mat{P}^K)^2  \\
     &= \mat{P}^2 - 2\;\mat{P}^3 + \cdots +(K-1)\;\mat{P}^{K} - K\;\mat{P}^{K+1} + (K-1)\;\mat{P}^{K+2} - \cdots - 2\;\mat{P}^{2K-1}+ \mat{P}^{2K} \\
     &= -K\;\mat{P} +\underbrace{\mat{P}^2 + (K-1)\;\mat{P}^{K+2} }_{K\;\mat{P}^{2}} - \cdots \underbrace{- 2\;\mat{P}^{2K-1} - (K-2)\;\mat{P}^{K-1} }_{-K\;\mat{P}^{K-1}} + \underbrace{(K-1)\;\mat{P}^K+\;\mat{P}^{2K}}_{K\mat{P}^K} \\
     &= K\;(-\mat{P}+\mat{P}^2-\mat{P}^3+\cdots+\mat{P}^K)\\
     &= K^2\;\mat{P}_0
 \end{align*}    
 which proves that $\mat{P}_0$ is idempotent.
\end{proof}

Theorems \ref{theo:covar1} and \ref{theo:skewcovar} are practical when the order $K$ of the permutation matrix $\mat{P}$ stays small compared to $J$ and $D$. For Hankel structures this is unfortunately not the case. Consider for example a $20 \times 20$ Hankel matrix. Its corresponding permutation matrix has permutation cycles ranging from length 1 up to 20 and $K$ is therefore the least common multiple of $1,2,\ldots,20 =232,792,560$. Fortunately, it is possible to explicitly construct a sparse matrix of orthogonal columns $\mat{V}$ such that $\sqrt{\mat{P}_0}=\mat{V}$. 

\section{Sparse square root covariance matrix construction for permutation-invariant tensors}
Every permutation $\mat{P}$ can be decomposed in terms of $R$ cyclic permutations. These cyclic permutations partition the set of all tensor entries into $R$ disjoint sets and allow for an alternative construction of $\sqrt{\mat{P}_0}$, where the resulting matrix is sparse and consists of orthogonal columns. 

\begin{theorem}
\label{theo:orthcovar}
Let $\mat{P}$ be a permutation matrix that consists of $R$ permutation cycles and let $C_r$ denote the $r$th cycle, where the number of tensor entries in $C_r$ is denoted $|C_r|$. Then the range of the matrix $\mat{V} \in \mathbb{R}^{J^D\times R}$ such that
\begin{equation}
v_{\overline{j_1,j_2,\ldots, j_D},r} = 
        \begin{cases}
        \frac{1}{\sqrt{|C_r|}} & \text{if  } w_{j_1,j_2,\ldots,j_D} \in {C}_r, \\[2ex]
        0& \text{otherwise, }
    \end{cases}
\end{equation}
spans the eigenspace of $\mat{P}$ corresponding to an eigenvalue $\lambda=1$. In other words, $\mat{V} = \sqrt{\mat{P}_0}$. Also, $\mat{V}^T\mat{V}=\mat{I}_R$.
\end{theorem}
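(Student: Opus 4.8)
The plan is to establish, in order: (i) that $\mat{V}$ has orthonormal columns; (ii) that the column space of $\mat{V}$ coincides with the eigenspace of $\mat{P}$ for $\lambda=1$; and (iii) that (i)--(ii) together force $\mat{V}=\sqrt{\mat{P}_0}$.

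For (i), recall that the $R$ cycles $C_1,\dots,C_R$ partition the set of all $J^D$ tensor entries, so the columns $\mat{v}_1,\dots,\mat{v}_R$ of $\mat{V}$ have pairwise disjoint supports and hence $\mat{v}_r^T\mat{v}_{r'}=0$ whenever $r\neq r'$. Column $r$ has exactly $|C_r|$ nonzero entries, each equal to $1/\sqrt{|C_r|}$, so $\mat{v}_r^T\mat{v}_r=|C_r|\cdot|C_r|^{-1}=1$. This proves $\mat{V}^T\mat{V}=\mat{I}_R$, the last assertion of the theorem, and it also tells us that $\mat{V}\mat{V}^T$ is the orthogonal projector onto $\operatorname{range}(\mat{V})$.

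For (ii), I would prove the two inclusions separately. ``$\subseteq$'': within a cycle $C_r$ the permutation $\mat{P}$ merely shuffles indices around the cycle; since the entries of $\mat{v}_r$ on $C_r$ are all equal and those off $C_r$ are zero, applying $\mat{P}$ leaves $\mat{v}_r$ unchanged, i.e.\ $\mat{P}\mat{v}_r=\mat{v}_r$. Thus every column of $\mat{V}$, and therefore every vector of $\operatorname{range}(\mat{V})$, lies in $\ker(\mat{I}-\mat{P})$. ``$\supseteq$'': if $\mat{P}\mat{w}=\mat{w}$, then walking repeatedly through a cycle of $\mat{P}$ while keeping $\mat{w}$ fixed shows that $\mat{w}$ takes a single common value on all indices of that cycle; hence $\mat{w}$ is a linear combination of the cycle indicator vectors, each of which is a scalar multiple of the corresponding $\mat{v}_r$, so $\mat{w}\in\operatorname{range}(\mat{V})$. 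Therefore $\operatorname{range}(\mat{V})$ is exactly the $\lambda=1$ eigenspace of $\mat{P}$, equivalently the right nullspace of $\mat{A}=\mat{I}-\mat{P}$.

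For (iii), note that by Theorem~\ref{theo:covar1} the matrix $\mat{P}_0=(\mat{P}+\cdots+\mat{P}^K)/K$ is symmetric and idempotent, hence an orthogonal projector; and since $\mat{P}\mat{P}_0=\mat{P}_0$ by Lemma~\ref{cor:powersofP} while $\mat{P}_0\mat{w}=\mat{w}$ for every $\mat{w}$ with $\mat{P}\mat{w}=\mat{w}$, its range is precisely the $\lambda=1$ eigenspace. By step (ii), $\mat{V}\mat{V}^T$ is the orthogonal projector onto that same subspace, and since two orthogonal projectors with equal range coincide, $\mat{P}_0=\mat{V}\mat{V}^T$. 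Thus $\mat{V}$ satisfies the defining relation $\mat{P}_0=\sqrt{\mat{P}_0}\,(\sqrt{\mat{P}_0})^T$, so we may take $\sqrt{\mat{P}_0}=\mat{V}$; alternatively, this is immediate from Theorem~\ref{theo:nullspace}, which characterizes $\sqrt{\mat{P}_0}$ as any matrix whose columns span the right nullspace of $\mat{A}$. The whole argument is essentially bookkeeping; the one point that needs care is fixing the permutation-matrix convention so that ``$\mat{P}\mat{w}=\mat{w}$ iff $\mat{w}$ is constant along each cycle'' is correct, together with checking that the normalization $1/\sqrt{|C_r|}$ is exactly what makes $\mat{V}\mat{V}^T$ a genuine orthogonal projector rather than merely a matrix with the right column space.
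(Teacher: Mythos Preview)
Your proposal is correct and follows the same approach as the paper: orthonormality from the cycles being disjoint and the $1/\sqrt{|C_r|}$ scaling, and $\mat{P}\mat{V}=\mat{V}$ from each column being constant on a single cycle. In fact you are more careful than the paper, which only sketches the inclusion $\operatorname{range}(\mat{V})\subseteq\ker(\mat{I}-\mat{P})$ and the orthonormality, whereas you also supply the reverse inclusion and the identification $\mat{P}_0=\mat{V}\mat{V}^T$.
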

\begin{proof}
The equality $\mat{P}\mat{V}=\mat{V}$ follows from each column of $\mat{V}$ containing nonzero values at tensor entries of a particular permutation cycle of $\mat{P}$. The orthogonality follows directly from the permutation cycles being disjoint and each column of $\mat{V}$ being unit-norm due to the scaling with $\sqrt{|C_r|}$.
\end{proof}
A basis for the skew-$\mat{P}$-invariant eigenspace can be built in a similar way by retaining the cycles of even order and alternating the sign of the entries $v_{\overline{j_1,j_2,\ldots, j_D},r}$ in each column. 

\begin{example}
Consider a $20 \times 20$ Hankel matrix. Using Theorem~\ref{theo:covar1} one would need to construct the $400 \times 400$ Hankel permutation matrix $\mat{H}$ and construct $\mat{P}_0$ by adding $232,792,560$ terms together. Using Theorem~\ref{theo:orthcovar} the sparse $400 \times 39$ matrix $\mat{V}$ can be constructed directly containing 400 nonzero entries.
\end{example}
\section{Solving the inverse problem}
In this section three different aspects when solving the inverse problem are discussed. First, we explain how the prior covariance matrices of $(\mat{A},\mat{b})$-constrained tensors can be parameterized. Second, we briefly discuss a change of variables, originally proposed in~\cite{chung2017}, to exploit fast implementations of the matrix vector product $\mat{P}_0\mat{w}$. The third aspect relates to kernel methods, where $(\mat{A},\mat{b})$-constrained tensor priors are used to define new structured tensor kernel functions.
\subsection{Parameterizing the prior covariance matrix}
\label{sec:parameterization}
The covariance matrix $\mat{P}_0$ as described in Theorems~\ref{theo:nullspace},~\ref{theo:covar1} and \ref{theo:orthcovar} encodes the structure of the $(\mat{A},\mat{b})$-constrained tensor without having any free parameters to quantify the importance of the prior $p(\mat{w})$ relative to the likelihood $p(\mat{y}|\mat{w})$. Such free parameters are often called hyperparameters. Suppose for example that through Theorem~\ref{theo:nullspace} an orthogonal basis for the nullspace $\mat{V}_2 \in \mathbb{R}^{J^D \times R}$ of $\mat{A}$ is computed from its singular value decomposition (SVD)
\begin{align*}
    \mat{A} = \begin{pmatrix}
        \mat{U}_1 & \mat{U}_2
    \end{pmatrix}\;\begin{pmatrix}
        \mat{S} & \mat{0}\\
        \mat{0} & \mat{0}\\
    \end{pmatrix}\;\begin{pmatrix}
        \mat{V}_1^T \\ \mat{V}_2^T
    \end{pmatrix}.
\end{align*}
A desired square-root covariance matrix $\sqrt{\mat{P}_0}$ is then $\mat{V}_2\,\mat{T}$, where $\mat{T} \in \mathbb{R}^{R \times R}$ is any invertible matrix.
The nullity $R$ of $\mat{A}$ can be interpreted as the total number of distinct elements in the $(\mat{A},\mat{b})$-constrained tensor $\ten{W}$.
%For the $\mat{P}$-invariance case the nullity $R$ also equals the total number of cycles into which the permutation can be decomposed.
The $\mat{T}$ matrix can be interpreted as the square-root covariance matrix of those $R$ variables since
\begin{align*}
    \mat{P}_0&= \sqrt{\mat{P}_0}\; (\sqrt{\mat{P}_0})^T = \mat{V}_2\;\left(\mat{T}\,\mat{T}^T \right) \; \mat{V}_2^T.
\end{align*}
The matrix $\mat{V}_2$ is then to be understood as ``projecting" the covariance matrix $\mat{T}\mat{T}^T$ of the $R$ underlying variables to the $J^D$ entries of the $(\mat{A},\mat{b})$-constrained $\ten{W}$ tensor. Parameterizing $\mat{T}$ in terms of a single hyperparameter $\sigma \in \mathbb{R^{+}}$ as $\mat{T} = \sigma \; \mat{I}$ implies that these $R$ variables are independent and have equal variance $\sigma^2$. Correlations between the $R$ variables can be modeled by for example parameterizing $\mat{T}$ as a lower triangular matrix. The values of these hyperparameters can be learned from data through cross-validation, marginal likelihood optimization or a hierarchical Bayesian approach~\cite{LSSVMbook,williams2006gaussian}.

\subsection{Change of variables}
Squaring the condition number when solving the normal equation of~\eqref{eq:meanposterior} can be avoided by solving its square-root version
\begin{align*}
    \begin{pmatrix}
        \sqrt{\mat{\Sigma}^{-1}}\mat{\Phi}\\
        \sqrt{\mat{P}_0^{-1}}
    \end{pmatrix} \; \mat{w}_+ &=   \begin{pmatrix}
        \sqrt{\mat{\Sigma}^{-1}}{\mat{y}}\\
        \sqrt{\mat{P}_0^{-1}}\,\mat{w}_0
    \end{pmatrix}
\end{align*}
instead. When constructing the square-root of the inverse prior covariance matrix is difficult then a change of variables can be used to avoid their construction~\cite{chung2017}. By defining $\mat{x}:=\mat{P}_0^{-1}\,(\mat{w}_+-\mat{w}_0)$ and $\mat{z}:=\mat{y}-\mat{\Phi}\mat{w}_0$ the square-root linear system is transformed into
\begin{align*}
    \begin{pmatrix}
        \sqrt{\mat{\Sigma}^{-1}}\mat{\Phi} \mat{P}_0\\
        \mat{I}
    \end{pmatrix} \; \mat{x} &=   \begin{pmatrix}
        \sqrt{\mat{\Sigma}^{-1}}{\mat{z}}\\
        0
    \end{pmatrix}.
\end{align*}
The desired posterior mean $\mat{w}_+$ can then be recovered from $\mat{w}_+=\mat{P}_0\,\mat{x}+\mat{w}_0$. This formulation is especially beneficial when the matrix vector product $\mat{P}_0\,\mat{x}$ can be implemented in a computationally efficient manner, for example using Algorithm~\ref{alg:fastsampling}.

\subsection{Structured tensor kernel functions}
When the tensor $\ten{W}$ is much larger than the data size $N$ then the $O(J^{3D})$ computational complexity of computing~\eqref{eq:meanposterior} is replaced with at least $O(N^2)$ by solving the corresponding dual problem
\begin{align*}
    (\mat{\Phi}\,\mat{P}_0 \, \mat{\Phi}^T + \mat{\Sigma} )\; \mat{v} = \mat{y}.
\end{align*}
An additional benefit is that no matrix inverse of $\mat{P}_0$ is required so that Theorems~\ref{theo:nullspace},~\ref{theo:covar1} and~\ref{theo:orthcovar} can be applied directly. The matrix $\mat{\Phi}\,\mat{P}_0 \, \mat{\Phi}^T$ is called the kernel matrix $\mat{K}$ and each entry $k_{i,j}$ is per definition the evaluation of a kernel function
$$
k_{i,j} = k(\mat{x}_i,\mat{x}_j) := \mat{\varphi}(\mat{x}_i)^T\,\mat{P}_0 \;\mat{\varphi}(\mat{x}_j).
$$
Choosing $\mat{P}_0$ as a covariance matrix of an $(\mat{A},\mat{b})$-constrained tensor allows us to define new kernel functions. The kernel trick in machine learning refers to the fact where the kernel function can be evaluated without every explicitly computing the possibly large feature vectors $\mat{\varphi}(\cdot)$. In the case of $\mat{P}$-invariant tensors one can exploit the particular structure of $\mat{P}_0$ as described in Theorem~\ref{theo:covar1} or use Algorithm~\ref{alg:fastsampling} to achieve this goal.
\begin{example}(Centrosymmetric polynomial kernel)
Let $\sqrt{c} \in \mathbb{R}$ and $d \in \mathbb{N}$. The polynomial kernel function is defined as
% as $k(\mat{x}_i, \mat{x}_j ) = (c + \mat{x}_i^T\,\mat{x}_j)^d$ where $c \in \mathbb{R}$ and $d$ is a nonnegative integer. This kernel function is written in terms of the feature vectors $\varphi(\cdot)$ as
\begin{align*}
k(\mat{x}_i, \mat{x}_j ) &= \mat{\varphi}(\mat{x}_i)^T \;\mat{I}\; \mat{\varphi}(\mat{x}_j) ,\\
&= \underbrace{\begin{pmatrix}\sqrt{c} & \mat{x}_i^T \end{pmatrix} \otimes \cdots \otimes \begin{pmatrix}\sqrt{c} & \mat{x}_i^T \end{pmatrix}}_{d \textrm{ times}}\; \mat{I} \; \underbrace{\begin{pmatrix}\sqrt{c} & \mat{x}_j^T \end{pmatrix}^T \otimes \cdots \otimes \begin{pmatrix}\sqrt{c} & \mat{x}_j^T \end{pmatrix}^T}_{d \textrm{ times}}\\
&=(c + \mat{x}_i^T\,\mat{x}_j)^d.
\end{align*}
The expression $(c + \mat{x}_i^T\,\mat{x}_j)^d$ is obtained from writing the identity matrix $\mat{I}$ as a Kronecker product of smaller identity matrices and applying the mixed product property. The polynomial kernel function can therefore be interpreted as using a unit covariance matrix $\mat{P}_0$. We can now define the centrosymmetric polynomial kernel function $k_2$ by using the polynomial feature vectors $\mat{\varphi}(\cdot)$ and replacing $\mat{I}$ with the covariance matrix of centrosymmetric tensors. From Theorem~\ref{theo:covar1} it then follows that
\begin{align*}
k_2(\mat{x}_i, \mat{x}_j ) &= \mat{\varphi}(\mat{x}_i)^T \;\mat{P}_0\; \mat{\varphi}(\mat{x}_j) ,\\
&= \frac{1}{2}\,\mat{\varphi}(\mat{x}_i)^T \;{(\mat{I} + \mat{J} )}\; \mat{\varphi}(\mat{x}_j) ,\\
&= \frac{1}{2}\, \underbrace{\begin{pmatrix}\sqrt{c} & \mat{x}_i^T \end{pmatrix} \otimes \cdots \otimes \begin{pmatrix}\sqrt{c} & \mat{x}_i^T \end{pmatrix}}_{d \textrm{ times}}\; (\mat{I}+\mat{J}) \; \underbrace{\begin{pmatrix}\sqrt{c} & \mat{x}_j^T \end{pmatrix}^T \otimes \cdots \otimes \begin{pmatrix}\sqrt{c} & \mat{x}_j^T \end{pmatrix}^T}_{d \textrm{ times}},\\
&= \frac{1}{2}  (c+\mat{x}_i^T\mat{x}_j )^d + \frac{1}{2} \left(\begin{pmatrix} \sqrt{c} & \mat{x}_i^T \end{pmatrix} \mat{J}_d \begin{pmatrix} \sqrt{c} & \mat{x}_j^T \end{pmatrix}^T \right)^d. 
\end{align*}
Also here the explicit construction of $\mat{\varphi}(\cdot)$ is avoided by writing the matrix $\mat{J} \in  \mathbb{R}^{J^D \times J^D}$ as a Kronecker product of the smaller permutation matrix $\mat{J}_d \in \mathbb{R}^{J \times J}$ with itself $d$ times and using the mixed-product property.
\end{example}
% \begin{align*}
%     \bar{y} &= \varphi(\mat{x}^*)^T\;\mat{P}_0 \, \mat{\Phi}^T\;\mat{v}, \; \sigma_y^2 = \varphi(\mat{x}^*)^T\mat{P}_0 \varphi(\mat{x}^*) - \varphi(\mat{x}^*)^T\mat{P}_0\mat{\Phi}^T(\mat{\Phi}\,\mat{P}_0 \, \mat{\Phi}^T + \mat{\Sigma} )^{-1}.
% \end{align*}
% An interesting case happens when $\mat{\Phi} = \mat{I}$ and $\Sigma = \nicefrac{\mat{I}}{\sigma_e^2}.$ The linear system in $\mat{w}_+$ and $\mat{x}$ is then
% \begin{align*}
%     \begin{pmatrix}
%         \frac{1}{\sigma_e}\mat{I}\\
%         \sqrt{\mat{P}_0^{-1}}
%     \end{pmatrix} \; \mat{w}_+ =   \begin{pmatrix}
%         \frac{1}{\sigma_e}{\mat{y}}\\
%         \sqrt{\mat{P}_0^{-1}}\mat{w}_0
%     \end{pmatrix}, \quad 
%     \begin{pmatrix}
%         \frac{1}{\sigma_e}\mat{P}_0\\
%         \mat{I}
%     \end{pmatrix} \; \mat{x} =   \begin{pmatrix}
%         \frac{1}{\sigma_e}{\mat{b}}\\
%         0
%     \end{pmatrix}.
% \end{align*}
% If one were to compute a solution
% Special case of white noise: prove that truncated-SVD solution to rank of prior gives posterior that exactly satisfies structure.

\section{Applications}
In this section we demonstrate the use of Theorems~\ref{theo:nullspace},~\ref{theo:covar1}, and~\ref{theo:orthcovar} in three different applications. 
Practical implementations on how to sample various $(\mat{A},\mat{b})$-constrained tensor priors are explained in Application~\ref{exp:exp1}. We consider lower triangular tensors, tensors for which the sum over the last index adds up to 1, symmetric tensors and Hankel tensors. Application~\ref{exp:exp2} considers the problem of completing a Hankel matrix from noisy partial measurements by solving it as a Bayesian inverse problem. The estimate of the completed Hankel matrix when using a Hankel prior is compared to the estimate where no prior is used. In Application~\ref{ex:ex3} learning a classifier for handwritten digits is solved as a Bayesian inverse problem. The classifier obtained with the commonly used Tikhonov prior is compared to several $(\mat{A},\mat{b})$-constrained tensor priors.

All applications have been implemented as reactive Pluto~\cite{pluto} notebooks in Julia~\cite{bezanson2017julia} and are publicly available at \href{https://github.com/TUDelft-DeTAIL/AbTensors}{https://github.com/TUDelft-DeTAIL/AbTensors}. The notebook files can be freely downloaded and run on your local machine in Julia. An alternative way to use these notebooks that does not require the installation of Julia is to run them in the cloud via Binder~\cite{binder}. This can be done by clicking on each of the links on the main Github page. Please note that it can take over 10 minutes for Binder to download and compile all required packages.

As discussed in section~\ref{sec:parameterization} we parameterized the prior covariance matrix $\mat{P}_0$ with a single hyperparameter $\sigma_P$ in both Applications~\ref{exp:exp2} and \ref{ex:ex3}.
\subsection{Sampling structured tensor priors}
\label{exp:exp1}
In this first application we demonstrate how Theorems~\ref{theo:nullspace},~\ref{theo:covar1} and~\ref{theo:orthcovar} are used to sample the priors of different $(\mat{A},\mat{b})$-constrained tensors.
\begin{example}(\textbf{Lower triangular tensors})
A first example of an $(\mat{A},\mat{b})$-constrained tensor considered here are lower triangular tensors. From Definition~\ref{def:triangulartensor} we know that triangular tensors are described by 
\begin{align*}
\mat{A} = \begin{pmatrix}\mat{A}_1 \\ \mat{A}_2 \\ \vdots \\ \mat{A}_{D-1} \end{pmatrix} = \begin{pmatrix}
        \mat{S}  \otimes  \mat{I}_J  \otimes   \cdots  \otimes  \mat{I}_J\\
        \mat{I}_J \otimes  \mat{S} \otimes  \cdots \otimes  \mat{I}_J\\
        \vdots \\
        \mat{I}_J \otimes  \mat{I}_J \otimes  \cdots \otimes  \mat{S}
\end{pmatrix} \in \mathbb{R}^{\frac{(D-1)(J-1)J^{D-1}}{2} \times J^D}    
\end{align*}
and zero vector $\mat{b}$. The square root of the covariance matrix is built up by applying Algorithm~\ref{alg:multipleconstraints}, which considers only 1 block row of $\mat{A}$ at a time. The whole $\mat{A}$ matrix is therefore never explicitly made. In the notebook it is possible to sample lower triangular tensors with orders ranging from 2 up to 5 and dimensions 2 up to 6 by moving the corresponding sliders.

% An example of a $6 \times 6$ lower triangular matrix sampled in this way is
% \begin{align*}
% \begin{pmatrix}
%      0.057705  & 0.0 &   0.0&        0.0     &   0.0 &      0.0\\
%  -0.37369  & -0.71027 &   0.0&        0.0       & 0.0      & 0.0\\
%  -1.08755 &   0.600503 & -0.209701  & 0.0       & 0.0    &   0.0\\
%  -0.334201   &1.33079   & 0.926074  & 0.869311 &  0.0      & 0.0\\
%   0.9346   & -0.164181 & -0.230121  &-0.110009 &  0.294956 & 0.0\\
%  -1.85009   & 0.771951 & -0.284285 &  0.467705  &-0.700058 & 0.205478  
% \end{pmatrix}.
% \end{align*}
\end{example}
\begin{example}(\textbf{Tensors with known sum of entries})
In this example we sample tensors $\ten{W}$ for which the sum over the last index always adds up to a value of 1: 
\begin{align*}
  \forall j_1, j_2, \ldots, j_{D-1}: \sum_{j_D} w_{j_1, j_2, \ldots, j_D} = b_{j_1, j_2, \ldots, j_{D-1}}=1.  
\end{align*}
From Lemma~\ref{lem:constantsum} we know that in this case $\mat{A} = \mat{1}_J^T \otimes \mat{I}_J \otimes \cdots \otimes \mat{I}_J$. It is straightforward to verify that a basis for the right nullspace of $\mat{A}$ is
\begin{align*}
 \begin{pmatrix}  1 & 1 & \cdots & 1 \\ -1 & 0 & \cdots & 0 \\ 0 & -1 & \cdots &0 \\ 0 & 0 & \cdots & -1\end{pmatrix}\otimes I_J \otimes \cdots \otimes I_J.   
\end{align*}
Sampling the prior can now be done without every constructing a basis for the nullspace explicitly since
\begin{align*}
\sqrt{\mat{P}_0}\; \mat{x} &= \left( \begin{pmatrix}  1 & 1 & \cdots & 1 \\ -1 & 0 & \cdots & 0 \\ 0 & -1 & \cdots &0 \\ 0 & 0 & \cdots & -1\end{pmatrix}\otimes \mat{I}_J \otimes \cdots \otimes \mat{I}_J \right) \;\mat{x} \\ 
% &= \begin{pmatrix}  \mat{I}_{J^{D-1}} & \mat{I}_{J^{D-1}} & \cdots & \mat{I}_{J^{D-1}} \\ -\mat{I}_{J^{D-1}} & 0 & \cdots & 0 \\ 0 & -\mat{I}_{J^{D-1}} & \cdots &0 \\ 0 & 0 & \cdots & -\mat{I}_{J^{D-1}}\end{pmatrix} \;\mat{x} \\
&=\begin{pmatrix} \mat{I}_{J^{D-1}} & \mat{I}_{J^{D-1}} & \cdots & \mat{I}_{J^{D-1}} \\ -\mat{I}_{J^{D-1}} & 0 & \cdots & 0 \\ 0 & -\mat{I}_{J^{D-1}} & \cdots &0 \\ 0 & 0 & \cdots & -\mat{I}_{J^{D-1}}\end{pmatrix} \; \begin{pmatrix} \mat{x}_1 \\ \mat{x}_2 \\ \vdots \\ \mat{x}_{J-1} \end{pmatrix}= \begin{pmatrix} \mat{x}_1 + \mat{x}_2 + \cdots + \mat{x}_{J-1} \\ -\mat{x}_1 \\ -\mat{x}_2 \\ \vdots \\ -\mat{x}_{J-1}\end{pmatrix}.    
\end{align*}
It is therefore sufficient to sample $\mat{x} \in \mathbb{R}^{(J-1)\,J^{D-1}}$ from a standard normal distribution and do the operations on the $J-1$ partitions of $\mat{x}$ as described above to generate the desired sample.
% An example of a generated $5 \times 5$ matrix sample is
% \begin{align*}
% \begin{pmatrix}
% -2.55782   & 1.49607  &  0.261848 &  0.920155 &  0.879752\\
%  -0.521966&   1.31607 &  -0.287877 &  0.760659 & -0.266883\\
%  -0.445579&  -0.330202 & -0.674927 &  0.690687 &  1.76002\\
%  -2.48504 &  -0.441223 &  2.02517  &  0.428044  & 1.47305\\
%   0.242143 &  2.06895 &  -0.220772&  -0.59756  & -0.492757    
% \end{pmatrix}  .  
% \end{align*}
% Summing each row of this matrix results in a vector of ones.
In the notebook one can change the order of the sampled tensor from 2 up to 5 and dimension from 5 up to 10 by using the corresponding sliders.
\end{example}
\begin{example}(\textbf{Symmetric tensors})
Symmetric tensors $\ten{W}$ are tensors for which entries are invariant under any index permutation. The permutation matrix $\mat{S}$ in the symmetric case consists of cyclic permutations where each each cycle contains the entry $w_{j_1, \ldots,j_D}$ and all entries with corresponding index permutations $w_{\pi (j_1,\ldots, j_D)}$. For example, in the case $D=2$ and $J=2$ the permutation matrix $\mat{S}$ consists of $3$ cyclic permutations
\begin{align*}
	w_{1,1} \mapsto w_{1,1}, \; w_{2,1} \mapsto w_{1,2}, \; w_{1,2} \mapsto w_{2,1}, \; 
	w_{2,2} \mapsto w_{2,2}.
\end{align*}
The order $K$ of $\mat{S}$ in this case is $2$ since $\mat{S}^2=I$. According to Theorem~\ref{theo:covar1} we then have that the square root of the covariance matrix is $\sqrt{\mat{P}_0} = \nicefrac {(\mat{S} + \mat{S}^2)}{2}.$ When $D=3$, the order $K$ of the corresponding permutation matrix is $6$ and hence $\sqrt{\mat{P}_0} = \nicefrac {(\mat{S} + \mat{S}^2 + \mat{S}^3 + \mat{S}^4 + \mat{S}^5 + \mat{S}^6)}{6}.$ Sampling from these priors is done via Algorithm~\ref{alg:fastsampling} where a standard normal sample $\mat{x} \in \mathbb{R}^{J^D}$ is generated and permuted $K$ times.
% An example of a sampled symmetric $5 \times 5$ matrix is
% \begin{align*}
% \begin{pmatrix}
% 0.530979 &  -0.0991279  &-0.421909 &  -1.76112    &-0.380734\\
%  -0.0991279&  -0.388004  & -0.0187159&   0.165119  & -0.446671\\
%  -0.421909 &  -0.0187159 & -0.879571&   -0.483638  &  0.624838\\
%  -1.76112  &   0.165119   &-0.483638&   -1.35093   &  0.0902599\\
%  -0.380734 &  -0.446671    &0.624838&    0.0902599 & -2.03848    
% \end{pmatrix}.
% \end{align*}
The notebook allows you to sample symmetric tensors of orders 2 and 3 and dimensions 3 up to 10.
\end{example}
\begin{example}(\textbf{Hankel tensors})
Hankel tensors $\ten{W}$ are tensors for which entries with a constant index sum $j_1+\cdots+j_D$ have the same numerical value. The order $K$ of the corresponding permutation matrix $\mat{P}$ grows very quickly. For example, when $D=2$ and $J=20$ the order $K$ is the least common multiple of $1,2,\ldots,20 = 232,792,560$. Theorem~\ref{theo:orthcovar}, however, allows us to construct a matrix $\sqrt{\mat{P}_0} \in \mathbb{R}^{J^D \times R}$, where $R$ is the number of permutation cycles. For Hankel tensors we have that $R = D(J-1)+1$.
% An example of a sampled $5 \times 5$ Hankel matrix is
% \begin{align*}
% \begin{pmatrix}
% -0.0125803&  -1.1419     &-0.0329509&  -0.333555&  -1.32625\\
%  -1.1419 &    -0.0329509  &-0.333555&   -1.32625&    0.20827\\
%  -0.0329509&  -0.333555   &-1.32625 &    0.20827&    0.759984\\
%  -0.333555 &  -1.32625    & 0.20827 &    0.759984&  -0.340118\\
%  -1.32625  &   0.20827    & 0.759984&   -0.340118&   1.22822    
% \end{pmatrix}    
% \end{align*}
The notebook allows you to sample Hankel tensors of order 2 up to 4 and dimensions 3 up to 10.
\end{example}
\subsection{Completion of a Hankel matrix from noisy measurements}
\label{exp:exp2}
Hankel matrices are very common in signal processing and control theory. In this application a Bayesian approach will be used to complete a Hankel matrix based on noisy incomplete measurements. For this we use the following forward model $\mat{y} = \mat{\Phi} \; \mat{w} + \mat{\epsilon}$, where $\mat{w} \in \mathbb{R}^{10^2}$ is the vectorization of the true underlying $10 \times 10$ Hankel matrix. The $ I \times 10^2$ matrix $\mat{\Phi}$ selects $I$ random entries of $\mat{w}$ with equal probability. Each row of $\mat{\Phi}$ contains a single nonzero unit-valued entry at a random location. The number of measurements $I$ can be changed through a slider in the notebook. The vector $\mat{\epsilon}$ is a vector of zero-mean Gaussian noise. Given $\mat{y}$ and $\mat{\Phi}$, a Bayesian estimate of the underlying Hankel matrix $\mat{W}$ can be obtained from~\eqref{eq:meanposterior} as the posterior mean $\mat{w}_+$. Another commonly used estimate is the maximum likelihood estimate, which is the $\mat{w}$ that maximizes the likelihood $p(\mat{y}|\mat{w})$. We compare two posterior estimates with the maximum likelihood estimate under two different assumptions on the noise covariance. We fix the sampling rate at $50\%$ and choose $\sigma_\epsilon^2=1$. The prior covariance matrix is set to $\sigma_P^2\,\mat{P}_0 = 10^{-6}\,\mat{P}_0$, where $\mat{P}_0$ is covariance matrix of the Hankel prior obtained via Theorem~\ref{theo:orthcovar}. 

\begin{example}(\textbf{White noise})
First we consider white noise, which implies that $\mat{\Sigma}=\sigma_\epsilon^2\,\mat{I}$.  The singular values of the prior precision $\nicefrac{\sqrt{\mat{P}_0^{-1}}}{\sigma_P}$, posterior precision $\begin{pmatrix}
        \nicefrac{\mat{\Phi}^T}{\sigma_\epsilon} & \nicefrac{\sqrt{\mat{P}_0^{-1}}^T}{\sigma_P}
    \end{pmatrix}^T$, and likelihood precision $\nicefrac{\mat{\Phi}}{\sigma_\epsilon}$
are shown in Figure~\ref{fig:svals_precsion}. They provide us with insight on how the prior, posterior and likelihood relate to each other. The likelihood $p(\mat{y}|\mat{w})$ only has 50 measurements and gives all of them equal weight. The prior $p(\mat{w})$ on the other hand only considers 19 nonzero values as a $10 \times 10$ Hankel matrix has 19 distinct entries. Given the relative high noise variance compared to the prior, the posterior $p(\mat{w}|\mat{y})$ ``follows" the prior for the first 19 singular values.
\begin{figure}
     \centering
     \begin{subfigure}[b]{0.49\textwidth}
         \centering
         \includegraphics[width=\textwidth]{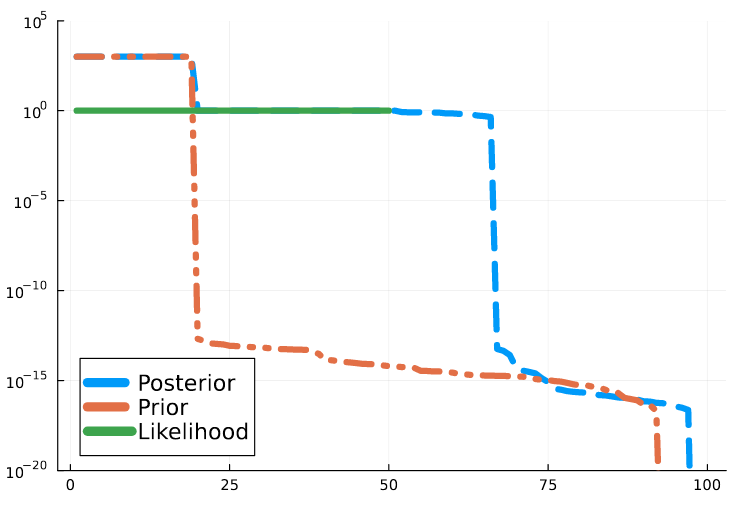}
         \caption{White-noise case. Given the relative high noise variance the posterior follows the prior for the first 19 singular values.}
         \label{fig:svals_precsion}
     \end{subfigure}
     \hfill
     \begin{subfigure}[b]{0.49\textwidth}
         \centering
     \includegraphics[width=\textwidth]{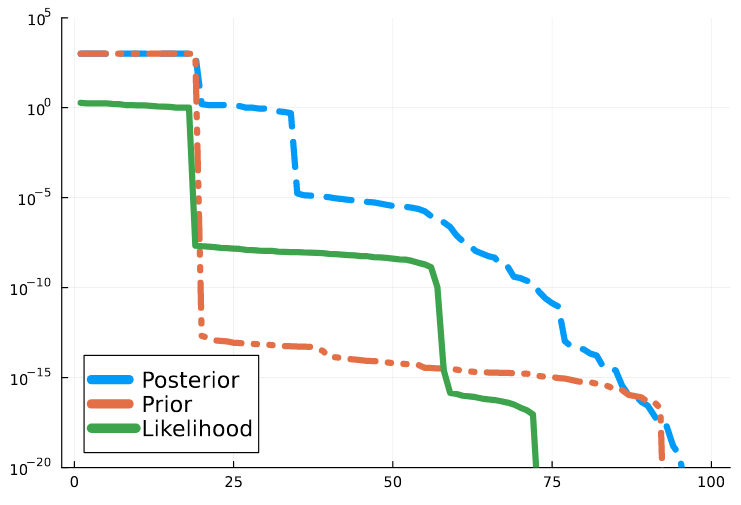}
         \caption{Hankel-noise case. Also in this case we have that the posterior follows the prior for the first 19 singular values.}
         \label{fig:svals_precsion2}
     \end{subfigure}
        \caption{Singular values of the square-root precision matrices of the prior, likelihood and posterior distribution. Only $50\%$ of the Hankel matrix $\mat{W}$ was measured. The noise variance is 1 and the prior variance is $10^{-6}$.}
    \label{fig:svals_exp2}
\end{figure}
A prior mean is obtained by averaging over the nonzero antidiagonals of the measurements and using those averages to construct a Hankel matrix. We now compute three different estimates and compare them to the ground truth. The first estimate is obtained from~\eqref{eq:meanposterior} with a backslash solve. A second estimate is computed by truncating the SVD of $\begin{pmatrix}\nicefrac{\Phi^T}{\sigma_\epsilon} &
        \nicefrac{{P}_0^{-T}}{\sigma_P}
\end{pmatrix}^T$ to rank 19 in equation~\eqref{eq:meanposterior}. The third estimate is the maximum likelihood estimate. For each of these estimates we show the relative error in Table~\ref{tab:hankel}.
\begin{table}[t]
\centering
\caption{Relative errors for three different Hankel matrix completion estimates $\mat{\hat{w}}$. Smallest relative error is indicated in bold.}%\vspace{3pt}
\label{tab:hankel} % is used to refer this table in the text
\begin{tabular}{@{}lrrr@{}}
                    & backslash  & truncated SVD    & max-likelihood \\\midrule 
$\frac{||\mat{w}-\mat{\hat{w}}||_2}{||\mat{w}||_2}$ (white noise)    & $ 0.160$     & $\textbf{0.137}$ & $0.614$     \\[2ex]
$\frac{||\mat{w}-\mat{\hat{w}}||_2}{||\mat{w}||_2}$ (Hankel noise)    & $ 0.235$     & $\textbf{0.137}$ & $0.604$     \\[2ex]
$\frac{||\mat{H}\mat{\hat{w}}-\mat{\hat{w}}||_2}{||\mat{\hat{w}}||_2}$         & $0.12$     & $6.3\text{e-}7$ & $0.80$     
\end{tabular}
\end{table}   
% \begin{align*}
% \frac{||\mat{w}_{\textrm{bs}} -\mat{w}||_2}{||\mat{w}||_2} = 0.160, \;   \frac{||\mat{w}_{\textrm{tsvd}} -\mat{w}||_2}{||\mat{w}||_2} = 0.137, \; \textrm{ and }   \frac{||\mat{w}_{\textrm{ml}} -\mat{w}||_2}{||\mat{w}||_2} = 0.614. 
% \end{align*}
Adding the Hankel prior shows a clear improvement on the completed Hankel matrix. The relative error is 4 times smaller from the inclusion of the prior. Since the first 19 singular values of the posterior are equal to the singular values of the prior one could expect the estimated posterior mean $\mat{w}_+$ obtained from truncating the SVD to the first 19 singular values to be Hankel. In order to confirm this we also compute the relative Hankel error $\nicefrac{||\mat{H}\,\mat{w}-\mat{w}||_2}{||\mat{w}||_2}$ for the three estimates in Table~\ref{tab:hankel}, where $\mat{H}$ is the Hankel permutation matrix. Restricting the posterior mean to lie in a subspace spanned by the first 19 right singular vectors indeed enforces a Hankel structure.
\end{example}

\begin{example}(\textbf{Hankel distributed noise})
To investigate the effect of the noise covariance on the estimates we now consider noise $\mat{e}$ that also has a Hankel structure. In other words, the covariance matrix for $p(\mat{e})$ is $\sigma_\epsilon^2\, \mat{P}_0$, whereas the prior covariance is $\sigma_P^2\, \mat{P}_0$.
With the noise being Hankel, this means that the perturbation $\mat{\epsilon}$ of $\mat{w}$ will have a Hankel structure as well. This can be modeled via the forward model $\mat{y} = \Phi (\mat{w} + \mat{\epsilon})$, where now $p(\Phi \mat{\epsilon}) = \mathcal{N}(0, \sigma_\epsilon^2\, \Phi\, P_0\, \Phi^T)$. Figure~\ref{fig:svals_precsion2} shows the singular values of the square-root precision matrices. The number of nonzero singular values of the likelihood now consists of 2 plateaus. Again, the posterior follows the prior for the first 19 singular values. Since now measurements of entries along the same antidiagonal are identical, less information is to be extracted from the measurements. This explains the first drop of Figure~\ref{fig:svals_precsion2} at the 19th singular value for both the likelihood and posterior. Less information also means that we can expect our estimate to be worse compared to the white noise case. The relative errors are now indeed higher, as seen in Table~\ref{tab:hankel}.
% \begin{align*}
% \frac{||\mat{w}_{\textrm{bs}} -\mat{w}||_2}{||\mat{w}||_2} = 0.235, \;   \frac{||\mat{w}_{\textrm{tsvd}} -\mat{w}||_2}{||\mat{w}||_2} = 0.137, \; \textrm{ and }   \frac{||\mat{w}_{\textrm{ml}} -\mat{w}||_2}{||\mat{w}||_2} = 0.604. 
% \end{align*}
Note however that the estimate obtained by truncating the SVD remains the same.
\end{example}
\subsection{Bayesian learning of MNIST classifier}
\label{ex:ex3}
In this application we learn a classifier for images of $10$ handwritten digits. The classifier is trained on the MNIST data~\cite{lecun1998gradient}, which consists of $60,000$ pictures for training and $10,000$ pictures for test. Each picture $\mat{x}_n$ is of size $28 \times 28$. We pick $10,000$ random samples from the training set and convert each picture $\mat{x}_n$ into $25^2=625$ Random Fourier Features $\mat{\varphi}(\mat{x}_n)_j = \text{Re}(e^{-i\,\mat{v}_j^T\mat{x}_n})$~\cite{rahimi2007random}. The 625 frequency vectors $\mat{v}_j$ are sampled from a zero-mean Gaussian with variance $\nicefrac{1}{5^2}\,\mat{I}$. We use a one-vs-all strategy by learning $10$ classifiers at once. Each classifier is trained to distinguish between $1$ particular class versus all others. The forward model for our $10$ classifiers is then $\mat{y} = \mat{\varphi}(\mat{x}) \; \mat{W} + \mat{e}$. Each column of $\mat{W} \in \mathbb{R}^{625 \times 10}$ contains the model parameters of $1$ specific classifier. In order to predict the class of a sample $\mat{x}^*$ we compute $\mat{y}^* = \mat{\varphi}(\mat{x}^*)\,\mat{W}$ and apply the softmax function
\begin{align*}
	\mat{\sigma}(\mat{y}^*) = \frac{e^{\mat{y}^*_k}} {\sum_k e^{\mat{y}^*_k}} \in \mathbb{R}^{10}.
	\end{align*}
The prediction is then the class with maximal $\mat{\sigma}(\mat{y}^*)$. The 10 classifiers are trained on a training data set of pictures $\mat{X} \in \mathbb{R}^{10,00 \times 784}$ and corresponding class labels $\mat{Y} \in \mathbb{R}^{10,000 \times 10}$. Our estimate for $\mat{W}$ is the mean of the posterior $p(\mat{W} | \mat{Y}, \mat{X} )$. The residual $\mat{e}$ is most commonly assumed to be zero-mean white Gaussian noise $p(\mat{e})=\mathcal{N}(0,\sigma_\epsilon^2\mat{I})$. Likewise, the prior $p(\mat{W})$ is usually assumed to be a zero-mean normal distribution with a uniform scaling covariance matrix $\mat{P}_0 = \sigma_P^2\;\mat{I}$. Such a prior is also called Tikhonov regularization.
% The posterior mean is then given by
% $$\mat{W}_+ = \left( \Phi(\mat{X})^T\Phi(\mat{X}) + \frac{\sigma_\epsilon^2}{\sigma_P^2}\;\mat{I}\right)^{-1}\, \Phi(\mat{X})^T\,\mat{Y}.$$
We compare the performance of the Tikhonov prior to other zero-mean $(\mat{A},\mat{b})$-constrained tensor priors (symmetric, Hankel en circulant), constructed using either Theorem 4.5 or Theorem 5.1. The noise variance $\sigma_\epsilon^2$ is set to a fixed value of 1. 
\begin{figure}
     \centering
     \begin{subfigure}[b]{0.49\textwidth}
         \centering
         \includegraphics[width=\textwidth]{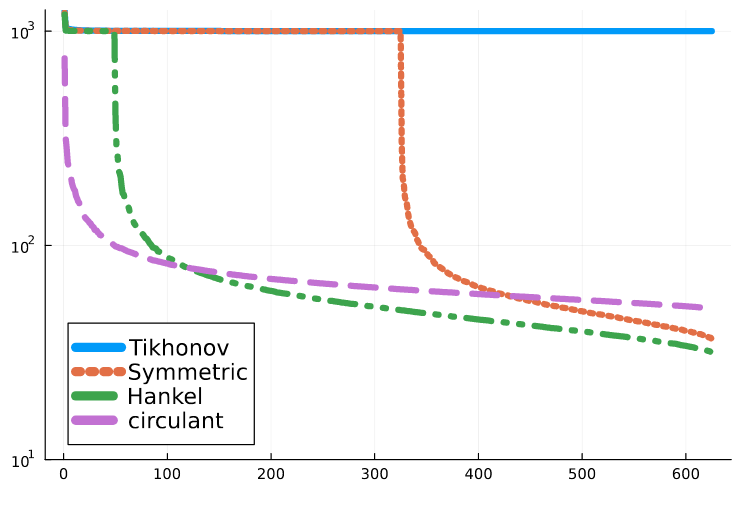}
         \caption{When $\sigma_P^2=10^{-6}$ large differences between the different posteriors are observed. The corresponding classifiers are therefore expected to also behave differently.}
         \label{fig:svals_precsion3}
     \end{subfigure}
     \hfill
     \begin{subfigure}[b]{0.49\textwidth}
         \centering
     \includegraphics[width=\textwidth]{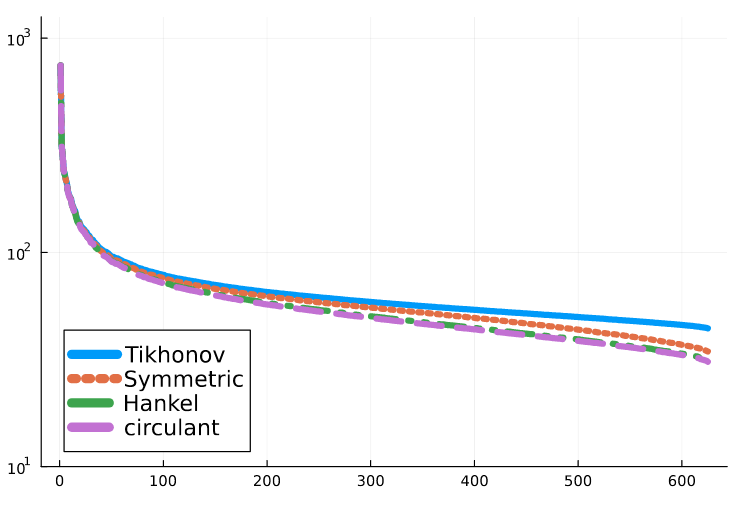}
         \caption{When $\sigma_P^2=10^{-3}$ all differences between the different posteriors have almost vanished. The corresponding classifiers are expected to also behave similarly.}
         \label{fig:svals_precsion4}
     \end{subfigure}
        \caption{Singular values of the square-root precision matrices of the posterior distribution for 4 different priors. The noise variance is fixed to 1.}
    \label{fig:svals_exp3}
\end{figure}
The difference between these different priors can be investigated by looking at the singular value profiles of the square-root precision matrices of the corresponding posteriors. These are shown in Figure~\ref{fig:svals_precsion3} for $\sigma_P^2=10^{-6}$ and in Figure~\ref{fig:svals_precsion4} for $\sigma_P^2=10^{-3}$. Being confident in the prior ($\sigma_P^2=10^{-6}$) has a strong effect on the corresponding posterior, which explains the large differences in singular value profiles. The corresponding classifiers can then be expected to also differ a lot on unseen test data. Indeed, applying the obtained classifiers on $10,000$ test images results in a relative number of correctly classified images shown in Table~\ref{tab:mnist}.
\begin{table}[t]
\centering
\caption{Comparison of relative number of correctly classified images for classifiers learned with different priors. Best classifier indicated in bold.}%\vspace{3pt}
\label{tab:mnist} % is used to refer this table in the text
\begin{tabular}{@{}lrrrr@{}}
                    & Tikhonov  & symmetric    & Hankel & circulant \\\midrule 
$\sigma_P^2 = 10^{-6}$    & $ 0.650$     & $0.880$ & $\textbf{0.917}$ &  $0.915$    \\[2ex]
$\sigma_P^2 = 10^{-3}$    & $ 0.917$     & $0.918$ & $\textbf{0.920}$ & $0.919$ 
\end{tabular}
\end{table}   
% $$
% \textrm{Tikhonov} = 0.650,\; \textrm{symmetric} = 0.880, \; \textrm{Hankel} = 0.917,  \; \textrm{circulant} = 0.915.
% $$
All $(\mat{A},\mat{b})$-constrained priors outperform the conventional Tikhonov prior, with Hankel and circulant tensors having the best performance. By increasing the prior covariance to $\sigma_P^2=10^{-3}$ all singular value profiles become very similar. The corresponding classifiers have similar performance as seen in Table~\ref{tab:mnist}.
% $$
% \textrm{Tikhonov} = 0.917,\; \textrm{symmetric} = 0.918, \; \textrm{Hankel} = 0.920,  \; \textrm{circulant} = 0.919.
% $$
No significant classification improvement is observed for the Hankel and circulant priors.
\section{Conclusions}
A whole new class of Bayesian priors has been worked-out which could be potentially applied to a variety of different inverse problems. The main focus of this article was mostly on the theoretical foundation and where possible we discussed practical implementations without going into much detail. Although the curse of dimensionality when considering tensors of large order and dimension can be completely resolved via the corresponding dual problem, the computational complexity can still become prohibitively large with increasing sample size. To tackle this complexity the possibility to represent the prior mean vector and covariance matrix of these priors as exact low-rank tensor decompositions could be investigated. 

\section*{Acknowledgments}
Many thanks to Frederiek Wesel for valuable discussions and feedback.
%We would like to acknowledge the assistance of volunteers in putting
%together this example manuscript and supplement.

\bibliographystyle{siamplain}
\bibliography{references}

\begin{thebibliography}{10}

\bibitem{bardsley2018computational}
{\sc J.~M. Bardsley}, {\em Computational Uncertainty Quantification for Inverse
  Problems: An Introduction to Singular Integrals}, SIAM, 2018.

\bibitem{batselier2022low}
{\sc K.~Batselier}, {\em Low-rank tensor decompositions for nonlinear system
  identification: A tutorial with examples}, IEEE Control Systems Magazine, 42
  (2022), pp.~54--74.

\bibitem{batselier2017tensor}
{\sc K.~Batselier, Z.~Chen, and N.~Wong}, {\em {Tensor Network alternating
  linear scheme for MIMO Volterra system identification}}, Automatica, 84
  (2017), pp.~26--35.

\bibitem{batselier2017constructive}
{\sc K.~Batselier and N.~Wong}, {\em {A constructive arbitrary-degree Kronecker
  product decomposition of tensors}}, Numerical Linear Algebra with
  Applications, 24 (2017), p.~e2097.

\bibitem{bezanson2017julia}
{\sc J.~Bezanson, A.~Edelman, S.~Karpinski, and V.~B. Shah}, {\em Julia: A
  fresh approach to numerical computing}, SIAM review, 59 (2017), pp.~65--98.

\bibitem{blondel2016polynomial}
{\sc M.~Blondel, M.~Ishihata, A.~Fujino, and N.~Ueda}, {\em Polynomial networks
  and factorization machines: New insights and efficient training algorithms},
  in International Conference on Machine Learning, PMLR, 2016, pp.~850--858.

\bibitem{chung_silvia}
{\sc J.~Chung and S.~Gazzola}, {\em {Computational Methods for Large-Scale
  Inverse Problems: A Survey on Hybrid Projection Methods}}, SIAM Review, 66
  (2024), pp.~205--284.

\bibitem{chung2017}
{\sc J.~Chung and A.~K. Saibaba}, {\em {Generalized Hybrid Iterative Methods
  for Large-Scale Bayesian Inverse Problems}}, SIAM Journal on Scientific
  Computing, 39 (2017), pp.~S24--S46.

\bibitem{de1978varieties}
{\sc H.~F. de~Groote}, {\em On varieties of optimal algorithms for the
  computation of bilinear mappings i. the isotropy group of a bilinear
  mapping}, Theoretical Computer Science, 7 (1978), pp.~1--24.

\bibitem{epstein2007introduction}
{\sc C.~L. Epstein}, {\em Introduction to the mathematics of medical imaging},
  SIAM, 2007.

\bibitem{gleich2015multilinear}
{\sc D.~F. Gleich, L.-H. Lim, and Y.~Yu}, {\em Multilinear pagerank}, SIAM
  Journal on Matrix Analysis and Applications, 36 (2015), pp.~1507--1541.

\bibitem{golub2013matrix}
{\sc G.~H. Golub and C.~F. Van~Loan}, {\em Matrix computations}, JHU press,
  2013.

\bibitem{hansen2006deblurring}
{\sc P.~C. Hansen, J.~G. Nagy, and D.~P. O'leary}, {\em Deblurring images:
  matrices, spectra, and filtering}, SIAM, 2006.

\bibitem{kargas2021supervised}
{\sc N.~Kargas and N.~D. Sidiropoulos}, {\em Supervised learning and canonical
  decomposition of multivariate functions}, IEEE Transactions on Signal
  Processing, 69 (2021), pp.~1097--1107.

\bibitem{ko2020fast}
{\sc C.-Y. Ko, K.~Batselier, L.~Daniel, W.~Yu, and N.~Wong}, {\em Fast and
  accurate tensor completion with total variation regularized tensor trains},
  IEEE Transactions on Image Processing, 29 (2020), pp.~6918--6931.

\bibitem{kolda2009tensor}
{\sc T.~G. Kolda and B.~W. Bader}, {\em Tensor decompositions and
  applications}, SIAM review, 51 (2009), pp.~455--500.

\bibitem{lecun1998gradient}
{\sc Y.~LeCun, L.~Bottou, Y.~Bengio, and P.~Haffner}, {\em Gradient-based
  learning applied to document recognition}, Proceedings of the IEEE, 86
  (1998), pp.~2278--2324.

\bibitem{li2014limiting}
{\sc W.~Li and M.~K. Ng}, {\em On the limiting probability distribution of a
  transition probability tensor}, Linear and Multilinear Algebra, 62 (2014),
  pp.~362--385.

\bibitem{liu2012tensor}
{\sc J.~Liu, P.~Musialski, P.~Wonka, and J.~Ye}, {\em Tensor completion for
  estimating missing values in visual data}, IEEE transactions on pattern
  analysis and machine intelligence, 35 (2012), pp.~208--220.

\bibitem{mastronardi2000fast}
{\sc N.~Mastronardi, P.~Lemmerling, and S.~Van~Huffel}, {\em Fast structured
  total least squares algorithm for solving the basic deconvolution problem},
  SIAM Journal on Matrix Analysis and Applications, 22 (2000), pp.~533--553.

\bibitem{novikov_exponential_2018}
{\sc A.~Novikov, I.~Oseledets, and M.~Trofimov}, {\em Exponential machines},
  Bulletin of the Polish Academy of Sciences: Technical Sciences; 2018; 66; No
  6 (Special Section on Deep Learning: Theory and Practice); 789-797,  (2018).

\bibitem{pillonetto2010new}
{\sc G.~Pillonetto and G.~De~Nicolao}, {\em A new kernel-based approach for
  linear system identification}, Automatica, 46 (2010), pp.~81--93.

\bibitem{binder}
{\sc {P}roject {J}upyter, {M}atthias {B}ussonnier, {J}essica {F}orde, {J}eremy
  {F}reeman, {B}rian {G}ranger, {T}im {H}ead, {C}hris {H}oldgraf, {K}yle
  {K}elley, {G}ladys {N}alvarte, {A}ndrew {O}sheroff, M.~{P}acer, {Y}uvi
  {P}anda, {F}ernando {P}erez, {B}enjamin~{R}agan {K}elley, and {C}arol
  {W}illing}, {\em {B}inder 2.0 - {R}eproducible, interactive, sharable
  environments for science at scale}, in {P}roceedings of the 17th {P}ython in
  {S}cience {C}onference, {F}atih {A}kici, {D}avid {L}ippa, {D}illon
  {N}iederhut, and M.~{P}acer, eds., 2018, pp.~113 -- 120.

\bibitem{rahimi2007random}
{\sc A.~Rahimi and B.~Recht}, {\em Random features for large-scale kernel
  machines}, Advances in neural information processing systems, 20 (2007).

\bibitem{sarkka2023bayesian}
{\sc S.~S{\"a}rkk{\"a} and L.~Svensson}, {\em Bayesian filtering and
  smoothing}, vol.~17, Cambridge university press, 2023.

\bibitem{stoudenmire2016supervised}
{\sc E.~Stoudenmire and D.~J. Schwab}, {\em Supervised learning with tensor
  networks}, Advances in neural information processing systems, 29 (2016).

\bibitem{LSSVMbook}
{\sc J.~A.~K. Suykens, T.~{Van Gestel}, J.~{De Brabanter}, B.~{De Moor}, and
  J.~Vandewalle}, {\em Least Squares Support Vector Machines}, World
  Scientific, Singapore, 2002.

\bibitem{pluto}
{\sc F.~van~der Plas and M.~Bocheński}, {\em fonsp/pluto.jl: v0.19.42}, May
  2024.

\bibitem{van2000ubiquitous}
{\sc C.~F. Van~Loan}, {\em {The ubiquitous Kronecker product}}, Journal of
  computational and applied mathematics, 123 (2000), pp.~85--100.

\bibitem{wahls2014learning}
{\sc S.~Wahls, V.~Koivunen, H.~V. Poor, and M.~Verhaegen}, {\em {Learning
  multidimensional Fourier series with tensor trains}}, in 2014 IEEE Global
  Conference on Signal and Information Processing (GlobalSIP), IEEE, 2014,
  pp.~394--398.

\bibitem{wesel2021large}
{\sc F.~Wesel and K.~Batselier}, {\em {Large-Scale Learning with Fourier
  Features and Tensor Decompositions}}, Advances in Neural Information
  Processing Systems, 34 (2021), pp.~17543--17554.

\bibitem{williams2006gaussian}
{\sc C.~K. Williams and C.~E. Rasmussen}, {\em Gaussian processes for machine
  learning}, vol.~2, MIT press Cambridge, MA, 2006.

\end{thebibliography}

\end{document}

% --- supplement: ex_supplement.tex ---

\maketitle

\section{A detailed example}

Here we include some equations and theorem-like environments to show
how these are labeled in a supplement and can be referenced from the
main text.
Consider the following equation:
\begin{equation}
  \label{eq:suppa}
  a^2 + b^2 = c^2.
\end{equation}
You can also reference equations such as \cref{eq:matrices,eq:bb} 
from the main article in this supplement.

\lipsum[100-101]

\begin{theorem}
An example theorem.
\end{theorem}

\lipsum[102]
 
\begin{lemma}
An example lemma.
\end{lemma}

\lipsum[103-105]

Here is an example citation: \cite{KoMa14}.

\section[Proof of Thm]{Proof of \cref{thm:bigthm}}
\label{sec:proof}

\lipsum[106-112]

\section{Additional experimental results}
\Cref{tab:foo} shows additional
supporting evidence. 

\begin{table}[htbp]
\footnotesize
  \caption{Example table.}  \label{tab:smfoo}
\begin{center}
  \begin{tabular}{|c|c|c|} \hline
   Species & \bf Mean & \bf Std.~Dev. \\ \hline
    1 & 3.4 & 1.2 \\
    2 & 5.4 & 0.6 \\ \hline
  \end{tabular}
\end{center}
\end{table}

\bibliographystyle{siamplain}
\bibliography{references}